\newcommand{\footremember}[2]{%
    \footnote{#2}
    \newcounter{#1}
    \setcounter{#1}{\value{footnote}}%
}
\newcommand{\NN}{\ensuremath{\mathbb N}}
\newcommand{\abs}[1]{\left| {#1} \right|}
\DeclareMathOperator{\gen}{gen}
\DeclareMathOperator{\Span}{span}
\theoremstyle{plain} 
\newtheorem{theorem}{Theorem} 
\newtheorem{proposition}[theorem]{Proposition}
\newtheorem{corollary}[theorem]{Corollary}
\newtheorem{conjecture}[theorem]{Conjecture}
\newtheorem{problem}[theorem]{Problem}
\theoremstyle{remark}
\theoremstyle{definition}
\newtheorem{definition}[theorem]{Definition}
\title{Counting Dope Matrices}
\author{%
    Noga Alon\footremember{Princeton}{Department of Mathematics, Princeton University, Princeton, NJ 08540, USA and Schools of Mathematics and Computer Science, Tel Aviv University, Tel Aviv 69978, Israel}
    \and Noah Kravitz\footremember{alsoPrinceton}{Department of Mathematics, Princeton University, Princeton, NJ 08540, USA}
    \and Kevin O'Bryant\footremember{CUNY}{City University of New York (College of Staten Island and The Graduate Center), Staten Island, NY 10314, USA}}
\date{\today}
\begin{document}
    \maketitle

\begin{abstract}
For a polynomial $P$ of degree $n$ and an $m$-tuple $\Lambda=(\lambda_1,\dots,\lambda_m)$ of distinct complex numbers, the dope matrix of $P$ with respect to $\Lambda$ is $D_P(\Lambda)=(\delta_{ij})_{i\in [1,m],j\in[0,n]}$, where $\delta_{ij}=1$ if $P^{(j)}(\lambda_i)=0$, and $\delta_{ij}=0$ otherwise. Our first result is a combinatorial characterization of the $2$-row dope matrices (for all pairs $\Lambda$); using this characterization, we solve the associated enumeration problem.  We also give upper bounds on the number of $m\times(n+1)$ dope matrices, and we show that the number of $m \times (n+1)$ dope matrices for a fixed $m$-tuple $\Lambda$ is maximized when $\Lambda$ is generic.  Finally, we resolve an ``extension'' problem of Nathanson and present several open problems.
\end{abstract}

\section{Introduction}
Let $P^{(j)}$ denote the $j$-th derivative of the polynomial $P(x)$.  Given a polynomial $P \in \mathbb{C}[x]$ of degree $n$ and an $m$-tuple $\Lambda=(\lambda_1, \ldots, \lambda_m)$ of distinct complex numbers, the \emph{dope matrix of $P$ with respect to $\Lambda$} is the $m\times(n+1)$ matrix $D_P(\Lambda)=(\delta_{ij})_{i \in [1,m], j \in [0,n]}$, where
    $$\delta_{ij} = \begin{cases} 1 &\text{if } P^{(j)}(\lambda_i) = 0 \\ 0 & \text{otherwise}\end{cases}.$$
In other words, the dope matrix records the ``zero pattern'' of the derivatives of $P$ evaluated on $\Lambda$.\footnote{Hence ``dope'' is an acronym for ``derivatives of a polynomial, evaluated''.}  We say that a $\{0,1\}$-matrix is simply \emph{dope} if it equals $D_P(\Lambda)$ for some $P$ and $\Lambda$.  For an $m$-tuple $\Lambda$ and a natural number $n$, let $$\mathcal{D}_n(\Lambda)=\{D_P(\Lambda): \deg(P)=n\}$$
denote the set of all $m \times (n+1)$ dope matrices arising from $\Lambda$.  Further, let
$$\mathcal{D}_n^m=\{D_P(\Lambda): \deg(P)=n, \Lambda\in \mathbb{C}^m \text{ distinct}\}$$
denote the set of all $m \times (n+1)$ dope matrices (where now we let both $P$ and $\Lambda$ vary).  There are many natural questions about the sets $\mathcal{D}_n(\Lambda)$ and how they relate to one other.

It is straightforward to check (see~\cite{Nathanson}) that if $\Lambda=\{\lambda_1, \ldots, \lambda_m\}$ and $\Lambda'=(\lambda'_1, \ldots, \lambda'_m)$ are related by $\lambda'_i=a+b\varphi(\lambda_i)$ where $a \in \mathbb{C}$, $b \in \mathbb{C}^{\times}$, and $\varphi$ is a $\mathbb{Q}$-automorphism of $\mathbb{C}$, then $\mathcal{D}_n(\Lambda)=\mathcal{D}_n(\Lambda')$.  In particular, if $\Lambda$ has size $1$ or $2$, then $\mathcal{D}_n(\Lambda)$ does not depend on the particular choice of $\Lambda$.  If $\Lambda$ has size at least $3$, then we lose nothing by assuming that (say) $\lambda_1=0$ and $\lambda_2=1$.

Nathanson~\cite{Nathanson}*{Theorem 2} noted that a $1$-row $\{0,1\}$-matrix is dope if and only if the last entry is $0$; in particular, the number of $1 \times (n+1)$ dope matrices is $2^n$. Our first result provides a necessary and sufficient condition for a $2$-row matrix to be dope.
\begin{theorem}\label{thm:m=2}
A $2 \times (n+1)$ $\{0,1\}$-matrix is dope if and only if for each $k\in[0,n]$, there are at most $k$ nonzero entries in the last $k+1$ columns.
\end{theorem}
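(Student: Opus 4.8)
The plan is to route both directions through a single clean statement controlling how many derivatives of a polynomial can vanish at a point.

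For necessity, note that the number of $1$'s in columns $n-k,\dots,n$ of $D_P(\Lambda)$ is exactly the number of pairs $(i,j)$ with $i\in\{1,2\}$, $j\in[0,k]$, and $R^{(j)}(\lambda_i)=0$, where $R:=P^{(n-k)}$ has degree $k$ (since $(P^{(n-k)})^{(j)}=P^{(n-k+j)}$). So the theorem is equivalent to the assertion that for every degree-$k$ polynomial $R$ and distinct $\lambda_1,\lambda_2$, at most $k$ of the $2(k+1)$ numbers $R^{(j)}(\lambda_i)$ vanish. Writing $z_i:=\#\{j\in[0,k]:R^{(j)}(\lambda_i)=0\}$, I want $z_1+z_2\le k$. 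Replacing $R(x)$ by $\tilde R(x):=R(x+\lambda_1)$ turns $R^{(j)}(\lambda_1)$ into (a nonzero multiple of) the coefficient of $x^j$ in $\tilde R$, so $z_1=(k+1)-t$ with $t$ the number of nonzero coefficients (``terms'') of $\tilde R$, while $z_2$ counts the derivatives of $\tilde R$ vanishing at the \emph{nonzero} point $\delta:=\lambda_2-\lambda_1$. Thus $z_1+z_2\le k$ will follow from a \emph{Core Lemma}: a polynomial with exactly $t$ nonzero coefficients has at most $t-1$ vanishing derivatives at any nonzero point, since then $z_2\le t-1$ and $z_1+z_2\le (k+1-t)+(t-1)=k$.

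I would prove the Core Lemma by induction on $t$. Write $V=\sum_{l=1}^t c_l x^{e_l}$ with $c_l\neq0$ and $e_1<\cdots<e_t$, and set $J=\{j:V^{(j)}(\delta)=0\}$. Clearing a power of $\delta$ and a factorial, the condition $V^{(j)}(\delta)=0$ becomes $\sum_l c_l'\binom{e_l}{j}=0$ with $c_l':=c_l\delta^{e_l}\neq0$. Suppose $\abs{J}\ge t$; I would choose indices $j_1<\cdots<j_t$ in $J$ with $j_i\le e_i$ for every $i$. The matrix $\big[\binom{e_l}{j_i}\big]$ is then nonsingular, because the binomial matrix is totally nonnegative and, by a lattice-path (Lindstr\"om--Gessel--Viennot) argument, its minors are positive exactly under the condition $j_i\le e_i$; this forces $c'=0$, contradicting $c_l'\neq0$. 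The existence of such indices is a Hall condition, and this is where induction enters: for $j>e_i$ every term with $e_l\le e_i$ drops out ($\binom{e_l}{j}=0$), so the zeros of $V$ with $j>e_i$ coincide with those of the $(t-i)$-term polynomial $\sum_{l>i}c_l x^{e_l}$, of which there are at most $t-i-1$ by the inductive hypothesis; hence $\#\{j\in J:j\le e_i\}\ge t-(t-i-1)=i+1\ge i$, which is precisely Hall's inequality.

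For sufficiency I would build $P$ from the top down, constructing the derivatives $R_k:=P^{(n-k)}$ for $k=0,1,\dots,n$, each from the previous one by $R_k=\int R_{k-1}+C_k$ for a constant of integration $C_k$. The constant affects only column $n-k$: because $R_k(\lambda_2)-R_k(\lambda_1)=\int_{\lambda_1}^{\lambda_2}R_{k-1}=:I_{k-1}$ is independent of $C_k$, a column with two $1$'s forces $I_{k-1}=0$, a column with one $1$ forces $I_{k-1}\neq0$, and an empty column leaves $C_k$ essentially free. Crucially, $I_k$ is affine in $C_k$ with nonzero slope $\lambda_2-\lambda_1$, so a single integration constant can be spent either to realize the current column or to pre-set $I_k$ for the next one. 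I would run the induction maintaining the invariant that $R_k$ realizes columns $n-k,\dots,n$ and that $I_k$ already carries the value (zero or nonzero) demanded by column $n-k-1$; the content is to verify that $C_k$ can always discharge both tasks, and this is exactly where ``at most $k$ ones in the last $k+1$ columns'' is consumed, as the ballot-type bound ensuring that the free columns (where $I$ can be reset) are never outrun by the forced ones.

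The main obstacle on the necessity side is the Core Lemma itself: naive peeling by differentiation loses a factor, since the step that would rescue it is a discrete Rolle argument unavailable over $\mathbb{C}$, so the real work is the total-positivity/Hall argument above, where it is an honest \emph{determinant} (rather than a sign count) that survives base change to $\mathbb{C}$. On the sufficiency side the obstacle is the bookkeeping across consecutive forced columns: when several columns in a row carry $1$'s their constants are all pinned, and one must check that the resulting values of $I_k$ remain compatible with the prescribed pattern — the step that genuinely uses the full partial-sum condition and not merely the total bound $T\le n$.
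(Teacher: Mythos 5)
Your necessity argument is correct, and it is organized genuinely differently from the paper's. You reduce Condition T to a self-contained \emph{Core Lemma} on lacunary polynomials (a polynomial with exactly $t$ nonzero terms has at most $t-1$ vanishing derivatives of order at most its degree at any nonzero point --- you should state the restriction $j\le \deg V$ explicitly, or else $J$ is infinite), and you prove it by induction on $t$: the tail polynomials $\sum_{l>i}c_lx^{e_l}$ verify the Hall condition, the $t$ smallest elements of $J$ then satisfy $j_i\le e_i$, and the Gessel--Viennot/Lindstr\"om nonsingularity criterion for $\bigl(\binom{e_l}{j_i}\bigr)$ under exactly this interlacing condition kills the coefficient vector. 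The paper instead takes a minimal counterexample among dope matrices violating Condition T and applies the same Gessel--Viennot input directly to the full Taylor-coefficient system at $0$ and $1$. The key nonsingularity input is identical; your version isolates a clean, reusable lemma and trades the minimality bookkeeping for the Hall-type selection. This half is sound.

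The sufficiency half, however, has a genuine gap, which you half-acknowledge but do not close. Your invariant asks the single constant $C_k$ to do two jobs --- realize column $n-k$ and preset $I_k$ for the next column --- but when column $n-k$ is forced (contains a $1$), $C_k$ is pinned by the first job, so $I_k$ is whatever it is; demands from a run of consecutive forced columns therefore reach back past the run to constants you have already committed. Concretely, take $n=4$ and $D$ with columns $(1,1),(1,1),(0,0),(0,0),(0,0)$ read left to right: this satisfies Condition T (four ones in total, and every suffix count checks out) and is dope (take $P=x^2(x-1)^2$), but your right-to-left pass fails on it. Columns $4,3,2$ are free; at $k=2$ you must spend $C_2$ to make $I_2=0$ (demanded by column $1$); at $k=3$ column $1$ pins $C_3$; then column $0$ demands $I_3=0$, which is now a condition on the already-fixed $C_0,C_1,C_2$ and fails for generic earlier choices. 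To repair this you must defer the choice of all free constants and, for each maximal run of forced columns, solve a linear system in the free constants to its right; Condition T supplies exactly the right count (it is equivalent to: every suffix has strictly more all-zero columns than all-one columns), but you also need the relevant matrix of iterated integrals to be nonsingular --- a determinant statement of essentially the same depth as the Gessel--Viennot input, which your sketch does not supply. The paper sidesteps this entirely: it prepends $h$ all-ones columns so that the number of ones equals $n+h$, solves the resulting \emph{square} linear system (nonsingular, again by Gessel--Viennot) for a monic polynomial $P$ of degree $n+h$, invokes the already-proved necessity direction to rule out any extra ones in $D_P(0,1)$, and finally passes to $P^{(h)}$.
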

In particular, for each pair $\Lambda$ of distinct complex numbers, the set $\mathcal{D}_n(\Lambda)$ consists of precisely the matrices satisfying the condition described in the theorem.  This characterization also allows us count the $2$-row dope matrices.
\begin{corollary}\label{cor:counting}
The number of $2 \times (n+1)$ dope matrices is $\binom{2n+1}{n}$. 

The number of $2 \times (n+1)$ dope matrices with exactly $n$ nonzero entries is $\frac{1}{n+2} \binom{2n+2}{n+1}$, the $(n+1)$-th Catalan number.  

The number of $2 \times (n+1)$ dope matrices modulo the action of swapping the two rows is $\frac{1}{2}\left[\binom{2n+1}{n}+\binom{n}{\lfloor n/2\rfloor} \right]$.
\end{corollary}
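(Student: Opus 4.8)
The plan is to translate the characterization of Theorem~\ref{thm:m=2} into a lattice-path problem and then count the relevant paths. Given a $2\times(n+1)$ $\{0,1\}$-matrix, let $c_j\in\{0,1,2\}$ denote the number of $1$'s in column $j$. By Theorem~\ref{thm:m=2} the matrix is dope if and only if $\sum_{j=n-k}^{n}c_j\le k$ for every $k\in[0,n]$. Reading the columns from right to left, I would encode this by the walk with heights $h_k=k-\sum_{j=n-k}^{n}c_j$, so that $h_k\ge 0$ is exactly the dope condition; the case $k=0$ forces $c_n=0$ and $h_0=0$, and for $k\ge 1$ the increment is $h_k-h_{k-1}=1-c_{n-k}\in\{-1,0,+1\}$. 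Thus dope matrices correspond to nonnegative lattice walks of length $n$ starting at $0$ with up/level/down steps, where a column with exactly one $1$ produces a level step and carries a factor of $2$ (the choice of which row holds the $1$). Recording this choice as a $2$-coloring of the level steps, the set $\mathcal D_n^2$ is in bijection with nonnegative $2$-colored Motzkin meanders of length $n$ (up and down steps of weight $1$, two colors of level step), ending at an arbitrary height.

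For the first count I would determine the generating function of these meanders by the kernel method. Writing $M(x,u)=\sum_{n,h}m_{n,h}x^nu^h$ for the weighted count of meanders of length $n$ ending at height $h$, appending a final step gives
\begin{equation*}
\bigl(u-x(u+1)^2\bigr)M(x,u)=u-xM(x,0).
\end{equation*}
Substituting the small root $u_0=\tfrac{1-2x-\sqrt{1-4x}}{2x}$ of the kernel yields $M(x,0)=\tfrac{C(x)-1}{x}=\sum_{n\ge 0}C_{n+1}x^n$, where $C(x)=\tfrac{1-\sqrt{1-4x}}{2x}$ is the Catalan generating function. Setting $u=1$ then gives $M(x,1)(1-4x)=2-C(x)$, and simplifying produces $M(x,1)=\frac{1-\sqrt{1-4x}}{2x\sqrt{1-4x}}=\frac{1}{2x}\bigl(\frac{1}{\sqrt{1-4x}}-1\bigr)$, whose coefficients are $\binom{2n+1}{n}$. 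This proves the first assertion.

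The second assertion falls out of the same setup: the total number of $1$'s equals $\sum_j c_j=n-h_n$, which is at most $n$, so a dope matrix has exactly $n$ nonzero entries precisely when the associated walk ends at height $h_n=0$, i.e.\ is an excursion. These are the $2$-colored Motzkin excursions counted by $M(x,0)=\sum_{n}C_{n+1}x^n$, giving the $(n+1)$-th Catalan number.

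For the third assertion I would apply Burnside's lemma to the row-swap action (which preserves dopeness, since the condition of Theorem~\ref{thm:m=2} is symmetric in the rows), so that the number of orbits equals $\frac12\bigl(\binom{2n+1}{n}+\text{(number of dope matrices fixed by the swap)}\bigr)$. A matrix is fixed exactly when its two rows coincide, which forces $c_j\in\{0,2\}$ for every $j$; such matrices carry no coloring freedom and correspond bijectively to nonnegative walks of length $n$ with only $\pm1$ steps (no level steps), ending anywhere. The number of such walks is the classical central binomial count $\binom{n}{\lfloor n/2\rfloor}$, which completes the proof. The main technical work—the step I expect to be the real obstacle—is the kernel-method extraction in the second paragraph: setting up the correct functional equation with the level-step weight $2$, identifying the admissible root, and simplifying $M(x,1)$ into a recognizable closed form; the remaining bijective reductions and the Burnside count are then routine.
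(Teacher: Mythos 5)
Your proposal is correct, and it takes a genuinely different route from the paper's. Both arguments rest on Theorem~\ref{thm:m=2} and both dispose of the third count via Burnside's lemma for the row-swap involution, but the counting engine differs. The paper works with refined counts: it sets $C(n,t)$ to be the number of $2\times(n+1)$ dope matrices with exactly $t$ ones, establishes the recurrence $C(n,t)=C(n-1,t)+2C(n-1,t-1)+C(n-1,t-2)$ by conditioning on the first column, and verifies the closed form $C(n,t)=\binom{2n+1}{t}-\binom{2n+1}{t-1}$ by induction; a parallel recurrence gives $B(n,2s)=\binom{n}{s}-\binom{n}{s-1}$ for the matrices with two identical rows, and all three statements follow by telescoping these ballot-type formulas. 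Your right-to-left encoding of the column sums as a $2$-colored Motzkin meander, followed by the kernel-method extraction of $M(x,0)$ and $M(x,1)$, replaces that induction; I checked the functional equation $\bigl(u-x(u+1)^2\bigr)M(x,u)=u-xM(x,0)$, the substitution of the small root, and the simplification to $\frac{1}{2x}\bigl(\frac{1}{\sqrt{1-4x}}-1\bigr)$, and they are all right, as is the identity $\frac{1}{2}\binom{2n+2}{n+1}=\binom{2n+1}{n}$. What the paper's route buys: it is entirely elementary (no generating functions), and it yields the full distribution of the number of ones, which is strictly more refined information than the corollary needs. What your route buys: the closed forms are \emph{derived} rather than guessed and verified; the appearance of Catalan and central binomial numbers is conceptually transparent via the classical correspondences (bicolored Motzkin excursions with Catalan numbers, nonnegative $\pm 1$ walks with $\binom{n}{\lfloor n/2\rfloor}$); and the global constraint that a dope matrix has at most $n$ ones is automatically absorbed into nonnegativity of the walk, whereas the paper's first-column recurrence actually requires care at the boundary $t=n+1$ (as stated it fails there, since prepending a two-ones column to a maximal $2\times n$ matrix violates Condition T, and that case must be checked separately). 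Two small items to tidy in a final write-up: state explicitly that your column-by-column map is a bijection onto \emph{all} colored meanders (column $n$ is forced to be zero, and any colored meander reconstructs a unique matrix satisfying Condition T), and either cite or include the one-line reflection argument for the classical fact that nonnegative $\{\pm 1\}$-walks of length $n$ number $\binom{n}{\lfloor n/2\rfloor}$, which you currently invoke without proof.
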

\noindent
The trivial upper bound on $|\mathcal{D}_n^m|$, the number of $m\times(n+1)$ dope matrices, is $2^{nm}$. The first statement of Corollary~\ref{cor:counting} immediately gives an improvement on this bound.
\begin{corollary}\label{cor:counting big}
Let $m \geq 2$, $n\geq 1$. The number of $m\times(n+1)$ dope matrices is at most $$\binom{2n+1}{n}^{m/2};$$ an upper bound for this quantity is $\big(\frac{2}{\sqrt{\pi n}}\big)^{m/2} 2^{mn}$.
\end{corollary}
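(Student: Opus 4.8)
The plan is to bound $|\mathcal{D}_n^m|$ by leveraging the $2$-row count from Corollary~\ref{cor:counting} and an entropy (Shearer) argument to glue together the pairwise information. First I would record the crucial \emph{restriction property}: if $D=D_P(\Lambda)\in\mathcal{D}_n^m$ with $\Lambda=(\lambda_1,\dots,\lambda_m)$, then for any index set $S\subseteq[1,m]$ the submatrix of $D$ on the rows indexed by $S$ is exactly $D_P(\Lambda_S)$, where $\Lambda_S$ is the corresponding subtuple; hence it lies in $\mathcal{D}_n^{|S|}$. Specializing to two-element sets $S=\{i,i'\}$ and invoking the first part of Corollary~\ref{cor:counting}, every such $2$-row restriction is one of at most $\binom{2n+1}{n}$ matrices.

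Next comes the combining step. Let $X$ be drawn uniformly at random from $\mathcal{D}_n^m$, viewed as a random variable whose coordinates $X_1,\dots,X_m$ are its rows, so that the Shannon entropy satisfies $H(X)=\log_2|\mathcal{D}_n^m|$. I would apply Shearer's inequality with the covering family $\mathcal{F}$ of all $\binom{m}{2}$ two-element subsets of $[1,m]$, in which each row index lies in exactly $m-1$ members. This yields $(m-1)\,H(X)\le\sum_{\{i,i'\}}H(X_{\{i,i'\}})$, and by the previous paragraph each summand is at most $\log_2\binom{2n+1}{n}$. Therefore $(m-1)\log_2|\mathcal{D}_n^m|\le\binom{m}{2}\log_2\binom{2n+1}{n}$, which rearranges to $|\mathcal{D}_n^m|\le\binom{2n+1}{n}^{m/2}$.

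The main obstacle is the odd-$m$ case. The tempting elementary route uses submultiplicativity (the number of $(a+b)$-row dope matrices is at most the product of the $a$-row and $b$-row counts, again by restriction), pairing rows into $\lfloor m/2\rfloor$ pairs; but for odd $m$ the single leftover row contributes a factor of $2^n$, and since $2^n>\binom{2n+1}{n}^{1/2}$ for every $n\ge1$, this overshoots the target. The virtue of the uniform-cover / Shearer argument is that it treats all rows symmetrically and settles both parities simultaneously. (If one prefers to avoid entropy, the same conclusion follows from the Bollob\'as--Thomason box theorem, the purely combinatorial form of the uniform-cover inequality.)

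Finally, for the explicit estimate I would invoke the standard bound $\binom{2n}{n}\le 4^n/\sqrt{\pi n}$ together with $\binom{2n+1}{n}=\frac{2n+1}{n+1}\binom{2n}{n}<2\binom{2n}{n}$, giving $\binom{2n+1}{n}<\frac{2}{\sqrt{\pi n}}\,4^n$. Raising to the power $m/2$ and using $4^{mn/2}=2^{mn}$ then yields $\binom{2n+1}{n}^{m/2}<\big(\tfrac{2}{\sqrt{\pi n}}\big)^{m/2}2^{mn}$, completing the claim. This last step is a routine central-binomial estimate and should present no difficulty.
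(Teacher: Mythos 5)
Your proposal is correct and follows essentially the same route as the paper: restrict to pairs of rows (each pair giving a $2$-row dope matrix counted by Corollary~\ref{cor:counting}), apply Shearer's inequality over all $\binom{m}{2}$ pairs, and finish with a Robbins/Stirling-type bound on the central binomial coefficient. The only cosmetic difference is that you use the entropy formulation of Shearer while the paper uses the equivalent projection (product) form, and your explicit estimate $\binom{2n+1}{n}<\frac{2}{\sqrt{\pi n}}4^n$ matches the paper's invocation of Robbins' bound.
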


Using known results~\cite{RBG} on zero patterns of polynomials, we also obtain the following upper bound.
\begin{theorem}\label{thm:zero patterns}
Let $m \geq 3$, $n \geq 2$.  The number of $m \times (n+1)$ dope matrices is at most $$\binom{3n+m-2+(m-2)n(n+1)/2}{m+n-2};$$ 
an upper bound for this quantity is $\binom{mn^2}{m+n}$, which in turn is at most $2^{(m+n) \log_2 (emn)}$.
\end{theorem}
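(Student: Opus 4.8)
The plan is to realize each dope matrix as the \emph{zero pattern} of an explicit family of polynomials and then invoke the zero-pattern counting bound of \cite{RBG}. First I would use the symmetries noted in the introduction to shrink the parameter space: since scaling $P$ by a nonzero constant and applying an affine change of variable $x \mapsto a+bx$ both preserve dope matrices, every $m\times(n+1)$ dope matrix (for $m \ge 3$) arises from a monic polynomial $P(x)=x^n+c_{n-1}x^{n-1}+\cdots+c_0$ together with a tuple $\Lambda$ normalized so that $\lambda_1=0$ and $\lambda_2=1$. This leaves as free parameters the $n$ coefficients $c_0,\ldots,c_{n-1}$ and the $m-2$ points $\lambda_3,\ldots,\lambda_m$, a total of $K=n+m-2$ variables, and it is important that every dope matrix is still captured, so that we genuinely bound $|\mathcal{D}_n^m|$.

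Next I would view each entry $P^{(j)}(\lambda_i)$ as a polynomial $f_{ij}$ in these $K$ variables. Because $\deg P = n$, the last column is forced to vanish (as $P^{(n)}\equiv n!$), so a dope matrix is determined by the vanishing pattern of the $f_{ij}$ with $j\in[0,n-1]$; that is, the dope matrix is exactly the zero pattern of the family $\{f_{ij}\}$ at the corresponding point of $\mathbb{C}^K$. Hence the number of dope matrices is at most the number of realized zero patterns of this family, and \cite{RBG} bounds the latter by $\binom{\Delta+K}{K}$, where $\Delta=\sum_{i\in[1,m],\,j\in[0,n-1]}\deg f_{ij}$.

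The computation of $\Delta$ is the heart of the matching. For the two normalized points, $f_{1j}=P^{(j)}(0)=j!\,c_j$ and $f_{2j}=P^{(j)}(1)$ are linear in the coefficients, so each has degree $1$ and each of these two rows contributes $n$ to $\Delta$. For a free point $\lambda_i$ with $i\ge 3$, the monomials of $f_{ij}=\sum_{k=j}^{n}\frac{k!}{(k-j)!}c_k\lambda_i^{k-j}$ have degree at most $n-j$ (both the $c_{n-1}\lambda_i^{\,n-1-j}$ term and the $\lambda_i^{\,n-j}$ term realize this), so that row contributes $\sum_{j=0}^{n-1}(n-j)=n(n+1)/2$, and there are $m-2$ such rows. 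Thus $\Delta=2n+(m-2)n(n+1)/2$ and $\Delta+K=3n+m-2+(m-2)n(n+1)/2$, which is the first bound.

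Finally, the two simplified bounds follow from routine estimates. Using $n(n+1)/2\le n^2$ one checks $\Delta+K\le mn^2$ for $n\ge 2$, $m\ge 3$; since also $m+n-2\le m+n\le mn^2/2$ in this range, monotonicity of binomial coefficients (increasing in the top, and increasing in the bottom up to the midpoint) gives $\binom{\Delta+K}{m+n-2}\le\binom{mn^2}{m+n-2}\le\binom{mn^2}{m+n}$, and then $\binom{mn^2}{m+n}\le(e\,mn^2/(m+n))^{m+n}\le(emn)^{m+n}=2^{(m+n)\log_2(emn)}$. The main obstacle is not any single hard step but rather arranging the reduction so that the variable count and the total degree come out to \emph{exactly} $n+m-2$ and $2n+(m-2)n(n+1)/2$; the affine-plus-scaling normalization is precisely what makes these quantities line up with the stated formula, and one must take care that the degrees are computed in the reduced set of variables while no dope matrix is lost.
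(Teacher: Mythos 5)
Your proposal is correct and follows essentially the same route as the paper's proof: normalize to monic $P$ with $\lambda_1=0,\lambda_2=1$, view the $mn$ evaluations $P^{(j)}(\lambda_i)$, $j\in[0,n-1]$, as polynomials in the $m+n-2$ variables $c_0,\ldots,c_{n-1},\lambda_3,\ldots,\lambda_m$, compute the total degree $2n+(m-2)n(n+1)/2$, and apply the zero-pattern bound of R\'onyai--Babai--Ganapathy. Your verification of the two simplified bounds is a welcome addition, since the paper states them without detail.
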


A back-of-the-envelope calculation indicates that Corollary~\ref{cor:counting big} is stronger than Theorem~\ref{thm:zero patterns} if $m\le 10$ or $n\le 7$. Asymptotically, Theorem~\ref{thm:zero patterns} is stronger than Corollary~\ref{cor:counting big} when $n$ is smaller than around $2^m/m$ (e.g., $n=m$), and Corollary~\ref{cor:counting big} is stronger than Theorem~\ref{thm:zero patterns} when $n$ is larger than around $2^m/m$ (e.g., fixed $m$).

It is also of interest to estimate $|\mathcal{D}_n(\Lambda)|$ for fixed $\Lambda$, and the argument of Theorem~\ref{thm:zero patterns} gives a nontrivial upper bound in this setting.  For $0<p<1$, let $H(p)=-p\log_2(p)-(1-p)\log_2(1-p)$ denote the usual binary entropy function.

\begin{theorem}\label{thm:zero patterns2}
Let $m \geq 3$.  For each fixed $m$-tuple $\Lambda$ of distinct complex numbers, we have $|\mathcal{D}_n(\Lambda)|\leq \sum_{i=0}^n \binom{mn}{i};$
an upper bound for this quantity is $2^{mnH(1/m)}$.
\end{theorem}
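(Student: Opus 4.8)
The plan is to realize each dope matrix $D_P(\Lambda)$ (for fixed $\Lambda$) as the \emph{zero-pattern} of a short list of low-degree polynomials in the coefficients of $P$, and then to invoke the R\'onyai--Babai--Ganapathy bound~\cite{RBG}, exactly as in Theorem~\ref{thm:zero patterns} but now with $\Lambda$ held fixed so that far fewer variables appear. Recall that if $f_1,\dots,f_N$ are polynomials in $v$ variables over a field with $\sum_t \deg f_t = D$, then the number of distinct zero-patterns $\{t : f_t(x)=0\}$ realized as $x$ ranges over affine space is at most $\sum_{i=0}^{v}\binom{D}{i}$. The whole proof amounts to choosing the $f_t$ so that the parameters come out to $v=n$ and $D=mn$.

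First I would reduce to monic $P$. Since rescaling $P$ by a nonzero constant changes none of the vanishing conditions $P^{(j)}(\lambda_i)=0$, the set $\mathcal{D}_n(\Lambda)$ is unchanged if we restrict to monic polynomials $P(x)=x^n+\sum_{k=0}^{n-1}c_k x^k$. This leaves exactly $n$ free parameters $c_0,\dots,c_{n-1}$, which serve as the variables. Next I would record, for each $i\in[1,m]$ and $j\in[0,n-1]$,
$$P^{(j)}(\lambda_i)=\frac{n!}{(n-j)!}\lambda_i^{\,n-j}+\sum_{k=j}^{n-1}\frac{k!}{(k-j)!}\lambda_i^{\,k-j}\,c_k,$$
which is an affine-linear (hence degree-$1$) form in $c_0,\dots,c_{n-1}$; since the coefficient of $c_j$ is $j!\neq 0$, each of these $mn$ forms is genuinely non-constant. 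The final column $j=n$ never contributes anything, because $P^{(n)}\equiv n!\neq 0$, so the last column of every dope matrix is identically zero and hence determined. Consequently $D_P(\Lambda)$ is recovered entirely from the zero-pattern of the $mn$ forms displayed above.

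I would then apply the zero-pattern bound with $N=mn$ forms, $v=n$ variables, and $D=mn$ (each form having degree $1$), which gives $|\mathcal{D}_n(\Lambda)|\le\sum_{i=0}^{n}\binom{mn}{i}$. For the closed-form estimate I would invoke the standard partial-sum bound $\sum_{i=0}^{k}\binom{N}{i}\le 2^{N H(k/N)}$, valid for $k\le N/2$; taking $N=mn$ and $k=n$ gives $k/N=1/m\le 1/2$ (as $m\ge 2$), whence the bound is $2^{mn H(1/m)}$.

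The only real subtlety — and the step I would be most careful about — is the bookkeeping that lands the parameters on exactly $v=n$ and $D=mn$ rather than the weaker $v=n+1$, $D=m(n+1)$: the monic normalization is what removes one variable, and the vanishing of the last column is what removes one form from each row, so both reductions are genuinely needed to match the stated bound. Everything else is a direct application of~\cite{RBG} together with a routine entropy estimate.
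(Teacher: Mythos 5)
Your proof is correct and follows essentially the same route as the paper: restrict to monic $P$ so that only the $n$ coefficients $c_0,\dots,c_{n-1}$ are variables, discard the identically-zero last column, view the $mn$ evaluations $P^{(j)}(\lambda_i)$ with $j\in[0,n-1]$ as degree-one polynomials in those variables, and apply the R\'onyai--Babai--Ganapathy zero-pattern bound for linear polynomials (Theorem~\ref{thm:RBG2}) followed by the standard entropy estimate. The only difference is cosmetic: you quote the zero-pattern lemma in a more general form (arbitrary degrees summing to $D$), but your application uses only the linear case $D=N=mn$, which is precisely the statement the paper cites.
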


In a related direction, for $m \geq 3$ it is natural to ask which $m$-tuples $\Lambda$ maximize $|\mathcal{D}_n(\Lambda)|$.  The set $X$ of $m$-tuples $\Lambda$ in $\mathbb{C}^m$ whose entries satisfy a nontrivial integer polynomial relation is a meager set (that is, a countable union of nowhere-dense sets), so we will call $\Lambda$  \emph{generic} if it lies outside of $X$.  (A random tuple $\Lambda$ chosen in any reasonable way will thus be generic.)  Recall that if $\Lambda'=\varphi(\Lambda)$ for some $\mathbb{Q}$-automorphism $\varphi$ of $\mathbb{C}$, then $\mathcal{D}_n(\Lambda)=\mathcal{D}_n(\Lambda')$.  In particular, generic $m$-tuples $\Lambda$ and $\Lambda'$ are related in this fashion: Since the entries of $\Lambda$ and $\Lambda'$ are algebraically independent, standard facts about transcendence bases and the Axiom of Choice show that desired automorphism of $\mathbb{C}$ exists.\footnote{We remark that the reliance on the Axiom of Choice can be eliminated.  Indeed, given a particular polynomial $P$ of degree $n$, let $K$ denote the extension of $\mathbb{Q}$ formed by adjoining the elements of $\Lambda \cup \Lambda'$ and the coefficients of $P$; without using the Axiom of Choice, we can find an automorphism $\varphi$ of $K$ that sends each $\lambda_i$ to $\lambda'_i$, and this suffices to show that $D_P(\Lambda) \in \mathcal{D}_n(\Lambda')$.  (The point here is that $\varphi$ depends on not only $\Lambda$ and $\Lambda'$ but also the polynomial $P$ under consideration.)  We leave the remaining details to the interested reader.}  As such, we define $\mathcal{D}_n^{\gen(m)}$ to be the set $\mathcal{D}_n(\Lambda)$ for a generic $m$-tuple $\Lambda$.  We can now answer the question from the beginning of this paragraph.
\begin{theorem}\label{thm:generic}
Let $\Lambda$ be an $m$-tuple of distinct complex numbers, and let $n$ be a natural number.  Then $|\mathcal{D}_n(\Lambda)| \leq |\mathcal{D}_n^{\gen(m)}|$, and equality holds if and only if $\mathcal{D}_n(\Lambda)=\mathcal{D}_n^{\gen(m)}$.
\end{theorem}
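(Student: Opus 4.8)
The plan is to recast membership in $\mathcal{D}_n(\Lambda)$ as a matroidal condition and then exploit semicontinuity of rank under specialization. Fix $\Lambda$ and work in the $(n+1)$-dimensional space $V$ of polynomials of degree at most $n$; for each $(i,j)$ let $L_{ij}\in V^*$ be the evaluation functional $L_{ij}(P)=P^{(j)}(\lambda_i)$, and let $\phi\in V^*$ read off the coefficient of $x^n$ (so $L_{i,n}=n!\,\phi$ for every $i$). These functionals generate a linear matroid $M(\Lambda)$. I would first prove that $D\in\mathcal{D}_n(\Lambda)$ if and only if $F_D:=\{L_{ij}:\delta_{ij}=1\}$ is a flat of $M(\Lambda)$ with $\phi\notin\Span F_D$: a generic $P$ in the subspace $\bigcap_{L\in F_D}\ker L$ has vanishing-set exactly the closure of $F_D$, and it has degree exactly $n$ precisely when $\phi$ lies outside that closure. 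Thus $\abs{\mathcal{D}_n(\Lambda)}$ equals the number of such \emph{admissible} flats of $M(\Lambda)$, and everything is reduced to counting flats.

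Next I would pass to a generic tuple through a perturbation. Writing $\Xi=\Lambda+\varepsilon$ with $\varepsilon=(\varepsilon_1,\dots,\varepsilon_m)$ algebraically independent over the field generated by the entries of $\Lambda$, the entries of $\Xi$ are algebraically independent over $\QQ$, so $\Xi$ is generic and $\mathcal{D}_n(\Xi)=\mathcal{D}_n^{\gen(m)}$. The specialization $\varepsilon\to 0$ sends each $L_{ij}(\Xi)$ to $L_{ij}(\Lambda)$ without poles, so rank is lower-semicontinuous and $M(\Lambda)$ is a weak-map image of $M(\Xi)$ (every rank can only drop). The theorem then reduces to a statement on a common ground set: if $M'$ is a weak-map image of $M$, then $M$ has at least as many admissible flats as $M'$, with equality if and only if $M=M'$ — in which case the admissible flats literally coincide, giving $\mathcal{D}_n(\Lambda)=\mathcal{D}_n^{\gen(m)}$.

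To prove this flat-count inequality I would construct an injection from the admissible flats of $M'=M(\Lambda)$ into those of $M=M(\Xi)$, guided by the following local picture. A ``gain'' — an admissible flat $F$ of $M'$ that is not a flat of $M$ — forces a genuine rank drop inside $F$ under specialization, so some $1$-entry functional is redundant at $\Lambda$ although independent generically; flipping such a canonically chosen redundant entry from $1$ to $0$ produces a pattern that is realizable generically but has that entry forced to vanish at $\Lambda$, i.e.\ a ``loss.'' (The worked case $m=3$, $n=2$, $\Lambda=(0,1,\tfrac12)$ illustrates this: the single gain $P'(\lambda_3)=0$ forced by $\lambda_3=\tfrac12(\lambda_1+\lambda_2)$ flips to the loss in which that derivative is required nonzero, and there are strictly more losses than gains.) Running this correspondence should exhibit each gain as tied to a strictly larger supply of losses, yielding both the inequality and the strict inequality needed whenever any gain is present.

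I expect the crux to be exactly the injectivity and strictness of this correspondence. Flipping a redundant entry is not canonical a priori — a dependent set has several redundant members, and a loss may have several forced-zero entries — so the map must be pinned down by a fixed ordering of the positions $(i,j)$ and then shown to be reversible; and the equality clause requires arguing that a single genuine rank drop already destroys strictly more admissible flats than it creates. Controlling how flats merge under the weak map is where I would use the special structure of the $L_{ij}$, namely that they vary in an explicit polynomial family over $\QQ$ and that all top-order functionals are parallel to $\phi$, rather than any general monotonicity of flat counts under weak maps, which is delicate and can fail for abstract matroids.
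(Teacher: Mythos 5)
Your setup is sound and matches the paper's: the linear forms $L_{ij}$, the characterization of $\mathcal{D}_n(\Lambda)$ as the flats of the linear matroid whose span avoids the top-coefficient functional, and the fact that specialization from a generic tuple to $\Lambda$ can only drop ranks (the paper gets this by viewing each minor as a polynomial in algebraically independent quantities). The genuine gap is the step that carries all the content: the injection from admissible flats of $M(\Lambda)$ into admissible flats of $M(\Xi)$. Your ``gain/loss flipping'' scheme is not a construction. When an admissible flat $F$ of $M(\Lambda)$ fails to be a flat of $M(\Xi)$, there is no canonical redundant entry to flip; more fatally, the flipped pattern $F\setminus\{e\}$ need not be closed in $M(\Xi)$ (its generic closure can pick up elements outside $F\setminus\{e\}$), so it need not lie in $\mathcal{D}_n^{\gen(m)}$ at all; distinct gains can in principle collide after flipping; and the claim that each rank drop produces ``strictly more losses than gains'' is supported only by the worked example $\Lambda=(0,1,\tfrac12)$, with no mechanism offered to control how a single algebraic relation among the $\lambda_i$ reorganizes closures across many ranks simultaneously. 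You correctly identify these as the crux, and without them neither the inequality nor the equality clause is established.

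The missing idea is that no flipping is needed: send each admissible flat $F$ of $M(\Lambda)$ to $\operatorname{cl}_{M(\Xi)}(B_F)$, where $B_F$ is any basis of $F$ in $M(\Lambda)$. The weak-map property you already established does the rest, and the resulting monotonicity of flat counts does \emph{not} fail, even for abstract matroids: $B_F$ remains independent in $M(\Xi)$ (so, adjoining $v_0$ to the representation, the image flat is admissible), and if $\operatorname{cl}_{M(\Xi)}(B_F)=\operatorname{cl}_{M(\Xi)}(B_G)$, then each $e\in B_F$ makes $B_G\cup\{e\}$ dependent in $M(\Xi)$, hence dependent in $M(\Lambda)$, hence $e\in\operatorname{cl}_{M(\Lambda)}(B_G)=G$; thus $F\subseteq G$ and by symmetry $F=G$. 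This is exactly the paper's proof written matroidally: it fixes a basis $F_D$ of each $V_D$, transfers independence (together with $v_0$) from $\Lambda$ to the generic tuple via the determinant argument, and deduces injectivity of $D\mapsto D'$ from the reverse transfer of dependence. Note finally that your equality clause aims at the wrong target: you want equality to force $M(\Lambda)=M(\Xi)$, which is stronger than the theorem's conclusion $\mathcal{D}_n(\Lambda)=\mathcal{D}_n^{\gen(m)}$ (the matroids could a priori differ on flats through the column-$n$ elements, which are invisible to dope matrices), and you give no argument for either statement; the paper instead shows directly that once $\psi$ is a bijection, $\psi(D)=D$ for every $D$, which yields the set equality that is actually asserted.
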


Finally, we resolve Nathanson's extension problem~\cite{Nathanson}*{Open Question 4}.  An $m \times (n+1)$ $\{0,1\}$-matrix $D$ may or may not be dope; the extension problem for $D$ asks if we can find a polynomial $P$ and an $m$-tuple $\Lambda$ such that $D$ appears in $D_P(\Lambda)$ as the submatrix consisting of the first $n+1$ columns.  We answer this question in the affirmative, and indeed we establish a stronger form in which the tuple $\Lambda$ is also fixed beforehand.
\begin{theorem}\label{thm:extension}
Let $D$ be an $m \times (n+1)$ $\{0,1\}$-matrix, and let $\Lambda$ be an $m$-tuple of distinct complex numbers.  Then there is a polynomial $P$ with $n \leq \deg(P) \leq m(n+2)$ such that $D$ appears in $D_P(\Lambda)$ as the submatrix consisting of the first $n+1$ columns.  Moreover, if all of the entries of $\Lambda$ lie in some subfield $K$ of $\mathbb{C}$, then we may also choose $P$ to have all of its coefficients in $K$.
\end{theorem}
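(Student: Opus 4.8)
The plan is to realize $D$ not merely as a zero/nonzero pattern but by prescribing the \emph{exact} values of all derivatives of orders $0$ through $n$ at each node $\lambda_i$, thereby reducing the problem to Hermite interpolation with consecutive derivative data (for which the associated linear system is nonsingular). Concretely, define target values $v_{ij}$ for $i\in[1,m]$ and $j\in[0,n]$ by setting $v_{ij}=0$ when $\delta_{ij}=1$ and $v_{ij}=1$ when $\delta_{ij}=0$. Any polynomial $P$ with $P^{(j)}(\lambda_i)=v_{ij}$ for all such $i,j$ then satisfies $P^{(j)}(\lambda_i)=0 \iff \delta_{ij}=1$, so that $D$ is exactly the submatrix of $D_P(\Lambda)$ formed by columns $0,\dots,n$. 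The reason for over-prescribing (fixing the value of every derivative up to order $n$, rather than only those required to vanish) is that consecutive derivative conditions give a classical Hermite problem, whereas prescribing a vanishing pattern on a non-consecutive set of derivatives would lead to a Birkhoff-type system that can be singular.

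First I would establish that the interpolation problem is solvable. Let $V$ be the space of polynomials of degree less than $N:=m(n+1)$, and consider the linear map $\Phi\colon V\to\CC^{N}$ sending $P$ to the tuple $(P^{(j)}(\lambda_i))_{i\in[1,m],\,j\in[0,n]}$. Both spaces have dimension $N$, so it suffices to show $\Phi$ is injective. If $\Phi(P)=0$, then each $\lambda_i$ is a root of $P$ of multiplicity at least $n+1$; since the $\lambda_i$ are distinct, $\prod_{i=1}^m (x-\lambda_i)^{n+1}$ divides $P$, forcing $\deg P\ge N$ unless $P=0$. Hence $\Phi$ is an isomorphism, and there is a unique $P_0\in V$ with $\Phi(P_0)=(v_{ij})$.

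It remains to arrange the degree bound $n\le\deg P\le m(n+2)$. The interpolant $P_0$ has degree at most $N-1\le m(n+2)$, but its degree could be smaller than $n$. To repair the lower bound I would add the correction term $R(x)=\prod_{i=1}^m (x-\lambda_i)^{n+1}$: since $R$ has a zero of order $n+1$ at each $\lambda_i$, we have $R^{(j)}(\lambda_i)=0$ for all $j\in[0,n]$, so $P:=P_0+R$ still satisfies $\Phi(P)=(v_{ij})$ and hence realizes $D$ in its first $n+1$ columns. Because $\deg R=N>\deg P_0$, there is no cancellation and $\deg P=N=m(n+1)$, which lies in $[n,m(n+2)]$ as required (note $m(n+1)\ge n+1>n$). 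This completes the construction of $P$.

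Finally, the field refinement is essentially automatic. If every $\lambda_i$ lies in a subfield $K\subseteq\CC$, then the target values $v_{ij}\in\{0,1\}\subseteq K$, the map $\Phi$ restricts to a $K$-linear isomorphism from the degree-${<}N$ polynomials over $K$ onto $K^{N}$ (the injectivity argument above is valid over $K$, since $\prod_{i=1}^m(x-\lambda_i)^{n+1}\in K[x]$), and therefore $P_0\in K[x]$; as $R\in K[x]$ as well, we get $P\in K[x]$. The only genuine idea in the argument is the decision to prescribe consecutive derivative values so as to stay within the well-posed Hermite regime; the degree adjustment and the descent to $K$ are then routine, so I expect no serious obstacle.
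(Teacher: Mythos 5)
Your proposal is correct and is essentially the paper's own argument: the paper prescribes consecutive-order Taylor data at each $\lambda_i$ (coefficients $1$ where $\delta_{ij}=0$, $0$ where $\delta_{ij}=1$) and solves via the Chinese Remainder Theorem for $\mathbb{C}[x]$ modulo the $(x-\lambda_i)^{n+2}$, which is exactly the Hermite-interpolation solvability you prove by hand with the dimension-count/divisibility argument, and it repairs the degree with the same correction term $\prod_i (x-\lambda_i)^{n+2}$. The only differences are cosmetic (derivative values versus Taylor coefficients, a direct linear-algebra proof in place of citing CRT), and your version even yields the slightly sharper degree bound $m(n+1)$.
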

If $D$ is the all-$1$'s matrix, then the ``extending'' polynomial $P$ must have degree at least $m(n+1)$, and in this sense our upper bound on $\deg(P)$ is nearly tight (which answers~\cite{Nathanson}*{Open Question 5}.

We remark that the results of this paper remain valid if $\mathbb{C}$ is replaced by any other field of characteristic $0$.

In Section~\ref{sec:related}, we mention two topics in the existing literature that are related to dope matrices.  In Section~\ref{sec:multiplicity}, we gather several remarks about Nathanson's notion of multiplicity matrices of polynomials, which is essentially equivalent to the notion of dope matrices.  In Section~\ref{sec:two-rows}, we prove Theorem~\ref{thm:m=2} and Corollary~\ref{cor:counting} about $2$-row dope matrices.  In Section~\ref{sec:upper}, we establish the upper bounds Corollary~\ref{cor:counting big} and Theorems~\ref{thm:zero patterns} and~\ref{thm:zero patterns2}.  In Section~\ref{sec:generic}, we prove Theorem~\ref{thm:generic} about generic $m$-tuples $\Lambda$.  In Section~\ref{sec:extension}, we prove Theorem~\ref{thm:extension} about the extension problem, and we show that this result remains close to tight under some natural restrictions.  Finally, in Section~\ref{sec:conclusion}, we present a few open problems.

\section{Some Related Work in the Literature}\label{sec:related}

\subsection{Hermite-Birkhoff Interpolation.}
Fix positive integers $m\le n+1$; a set $E \subseteq [1,m]\times[0,n]$; a complex number $y_{ij}$ for each $(i,j)\in E$; and $m$ distinct complex numbers $\lambda_1,\dots,\lambda_m$.  The framework for Hermite-Birkhoff interpolation asks for conditions under which there is a unique polynomial $P$ of degree at most $n$ satisfying $P^{(j)}(\lambda_i)=y_{ij}$ for all $(i,j) \in E$. The set $E$ is called \emph{poised} if there is such a unique $P$ for every choice of the $y_{ij}$'s. Special cases of this problem include Taylor interpolation, Lagrange interpolation, and Hermite interpolation.

To determine whether a set $E$ is poised, it suffices to consider the case where all $y_{ij}=0$. The zero polynomial certainly satisfies the constraints, and the question becomes whether or not the zero polynomial is the only solution. This differs from our problem on two facets: we insist that our polynomial have degree exactly $n$, and we insist that $P^{(j)}(\lambda_i)\neq 0$ if $(i,j) \in ([1,m]\times[0,n]) \setminus E$.

For the case $m=2$, P\'olya~\cite{Polya} introduced what has since~(see~\cite{Sharma}) come to be called the P\'olya Condition. We invite the reader to compare the following result with our Theorem~\ref{thm:m=2}.

\begin{theorem}[P\'olya~\cite{Polya}]\label{thm:polya}
Let $n\ge1$, and fix a $2\times(n+1)$ $\{0,1\}$-matrix $E=(\epsilon_{ij})_{i \in [1,m], j\in [0,n]}$. Then the following conditions are equivalent:
\begin{itemize}
    \item The zero polynomial is the only polynomial $P$ of degree at most $n$ such that $P^{(j)}(i)=0$ for all pairs $(i,j)$ with $\epsilon_{ij}=1$.
    \item For all  $k\in[1,n+1]$, there are at least $k$ nonzero entries in the first $k$ columns of $E$.
\end{itemize}
\end{theorem}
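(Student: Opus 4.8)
The plan is to recast poisedness as a linear-algebra condition and then prove the two implications separately, with the forward direction elementary and the reverse direction resting on Rolle's theorem. Throughout, write $r_j$ for the number of $1$'s in column $j$, so that $r_j \in \{0,1,2\}$ and the Pólya condition reads $\sum_{j=0}^{k-1} r_j \ge k$ for every $k \in [1,n+1]$. The polynomials of degree at most $n$ form an $(n+1)$-dimensional space, and $P \mapsto \big(P^{(j)}(\lambda_i)\big)_{\epsilon_{ij}=1}$ is a linear map whose matrix is real; its kernel is trivial over $\CC$ exactly when it is trivial over $\RR$, so I may assume $P$ has real coefficients. The first bullet says precisely that this map is injective.

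First I would dispatch the easy direction: if the Pólya condition fails, the map is not injective. Suppose $\sum_{j=0}^{k-1} r_j \le k-1$ for some $k$, and restrict attention to the $k$-dimensional space of polynomials of degree at most $k-1$. For such a $P$ every derivative of order at least $k$ vanishes identically, so the conditions in columns $k, \dots, n$ hold automatically; the conditions in columns $0, \dots, k-1$ number at most $k-1$ and hence leave a nonzero solution. This produces a nonzero $P$ of degree at most $n$ meeting every constraint. Notably this argument is insensitive to the number of rows, which reflects the fact that the Pólya condition is necessary for any number of nodes.

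For the reverse direction I would argue by contradiction, assuming the Pólya condition holds yet some nonzero $P$ of degree $d \le n$ satisfies all the constraints, and then count real zeros of the successive derivatives. A convenient preliminary reduction is to the case of exactly $n+1$ conditions: since imposing more conditions only shrinks the kernel, it suffices to treat a sub-pattern with exactly $n+1$ ones that still obeys the Pólya condition, and one checks such a sub-pattern can always be extracted by deleting $1$'s from columns with slack in the Pólya inequalities. The heart of the matter is then a Rolle count. Each $1$ in column $j$ forces $P^{(j)}$ to vanish at one of the two nodes, and the multiplicity-aware form of Rolle's theorem --- a real polynomial with $N$ real zeros counted with multiplicity has a derivative with at least $N-1$ such zeros --- lets me propagate these forced zeros upward through $P^{(0)}, P^{(1)}, \dots$. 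The Pólya inequalities are exactly calibrated so that, were such a $P$ to exist, some $P^{(j)}$ would be forced to have strictly more real zeros (with multiplicity) than its degree $d-j$ permits, which is impossible.

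The hard part will be the bookkeeping in this count when a row contains a gap --- a $1$ in column $j$ and in column $j+2$ but a $0$ in column $j+1$, the Birkhoff as opposed to Hermite situation --- since across such a gap one cannot simply accumulate multiplicities at a node, and a naive Rolle tally can lose a zero. Resolving this is precisely where the restriction to two nodes is essential: with two nodes, a gap is always compensated by a Rolle zero produced strictly between the two nodes, whereas with three or more nodes an ``odd'' gap can fail to be compensated, and indeed the Pólya condition is no longer sufficient there. I would therefore organize the count to track separately, for each order $j$, the zeros forced at the nodes and those guaranteed in the open interval between them, and check that each cumulative inequality $\sum_{j<k} r_j \ge k$ keeps the running total from ever exceeding the degree bound until the contradiction surfaces.
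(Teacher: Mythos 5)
Your proof takes a genuinely different route from the paper's. The paper does not reprove P\'olya's theorem by real-variable methods; it observes (end of Section 4) that the theorem follows from the proof of Theorem 1, whose engine is the Gessel--Viennot criterion (Theorem 5): expanding $P$ in the Taylor bases at the two nodes, the P\'olya condition becomes exactly the dominance hypothesis $\abs{\mathcal{G}\cap[0,c]}\le\abs{\mathcal{H}\cap[0,c]}$ under which the binomial matrix $\bigl(\binom{g}{h}\bigr)$ is nonsingular, forcing $P=0$. Yours is instead the classical analytic argument (in effect the two-node case of Atkinson--Sharma). Your preliminary reductions are sound: the constraint matrix is real, so triviality of the kernel over $\CC$ and over $\RR$ agree, and your necessity direction is complete and correct. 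Two corrections to the reductions: the extraction of a sub-pattern with exactly $n+1$ ones must delete $1$'s from the \emph{last} nonempty column, since deleting from an arbitrary column ``with slack'' can break a later P\'olya inequality (e.g.\ ones at $(1,0),(2,0),(1,2),(2,2)$ with $n=2$: column $0$ has slack, but removing a $1$ there leaves only one $1$ in the first two columns); and in fact this reduction is unnecessary for your argument, since extra constraints only shrink the kernel and the count below never uses the exact total. What each approach buys: Gessel--Viennot is purely algebraic, needs no passage to $\RR$, and powers Theorem 1 of the paper at the same time; your Rolle argument is elementary and self-contained, and it isolates exactly why two nodes are special.

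The one place where your plan has a genuine danger is the count you defer, and the danger is precisely the one you half-identify: tracking cumulative numbers of zeros \emph{with multiplicity} is not enough, because a constraint $P^{(j+1)}(x_1)=0$ sitting above a run of constraints at $x_1$ is absorbed by the propagated multiplicity and contributes nothing new to a running total, so a purely numerical recursion of the form ``$N_{j+1}\ge N_j-1+(\text{new constraints})$'' is either invalid or too weak. The organization that works is the one your final sentence names, made level-by-level and with locations rather than multiplicities: for each $j$, certify as zeros of $P^{(j)}$ only (i) the $e_j$ column-$j$ constraints, which sit at the endpoints, and (ii) the Rolle zeros produced between consecutive certified zeros of $P^{(j-1)}$, which lie strictly inside the open interval between the nodes and hence can never collide with (i). Writing $z_j$ for the number of certified distinct zeros of $P^{(j)}$ and $T_j=\sum_{l\le j}e_l$, this gives $z_j\ge e_j+z_{j-1}-1$, so by induction and P\'olya's condition $z_j\ge T_j-j\ge 1$ for every $j$; taking $j=\deg P$ makes the nonzero constant $P^{(\deg P)}$ vanish somewhere, the desired contradiction. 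Note that no multiplicity-aware Rolle is needed at all, and the contradiction surfaces at $j=\deg P$, not at $j=n$. With this recursion spelled out your outline closes; as written, the central count is missing and the multiplicity-based version of it would not go through.
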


In fact, P\'olya's result is essentially equivalent to our Theorem~\ref{thm:m=2}, as we will discuss in Section~\ref{sec:two-rows}.

\subsection{Zero Patterns and Sign Patterns}

The bounds for zero patterns of polynomials are similar to earlier
bounds obtained by Warren~\cite{Wa} (see also~\cite{Mi} for 
sign patterns of real polynomials). These supply an upper bound for
the total number of sign patterns of a family of real polynomials
in terms of their degrees and the number of variables. Unlike the
proof in~\cite{RBG}, which is based on simple linear algebra tools,
Warren's work requires techniques from real algebraic geometry.
This result has found a considerable number of combinatorial
applications (see~\cite{Al} and the references therein), and in the context of the present paper it can be used to provide an upper bound
for the number of signed real dope matrices, defined as follows.

\begin{definition}\label{def:signed}
Given a real univariate polynomial $P$ and an $m$-tuple $\Lambda=(\lambda_1, \ldots, \lambda_m)$ of distinct real numbers, the \emph{signed real dope matrix of $P$ with respect to $\Lambda$} is the $m \times (n+1)$ matrix $S_P(\Lambda)=(\sigma_{ij})_{i \in [1,m], j \in [0,n]}$, where $\sigma_{ij}$ equals $0$, $+1$, or $-1$ according to the sign of $P^{(j)}(\lambda_i)$.
\end{definition}
Warren's theorem, together with the reasoning described above, 
implies that the total number of $m \times (n+1)$ 
signed real dope matrices is at most
$$
\left(\frac{Cmn^2}{m+n}\right)^{m+n}
$$
for some absolute constant $C>0$.  This variant could be an interesting topic for future inquiry.

\section{Multiplicity Matrices}\label{sec:multiplicity}
The present work is motivated by the central definition of a recent paper of Nathanson~\cite{Nathanson}.
Given a polynomial $P \in \mathbb{C}[x]$ of degree $n$ and an $m$-tuple $\Lambda=(\lambda_1, \ldots, \lambda_m)$  of distinct complex numbers, Nathanson defines the \emph{multiplicity matrix of $P$ with respect to $\Lambda$} to be the $m \times (n+1)$ matrix $M_P(\Lambda)=(\mu_{ij})_{i \in [1,m], j \in [0,n]}$, with the entry $\mu_{i,j}$ equal to the multiplicity of $\lambda_i$ as a root of $P^{(j)}$.  Nathanson established several properties of these matrices $M_P(\Lambda)$ and closed with eight open questions.

Our setup, which is often more convenient, is equivalent to Nathanson's: $D_P(\lambda)$ is obtained from $M_P(\Lambda)$ by replacing all nonzero entries by $1$'s, and $M_P(\Lambda)$ can be easily recovered from $D_P(\Lambda)$ (see Condition R below).  Nathanson observes that every $m \times (n+1)$ matrix $M_P(\Lambda)$ must satisfy several necessary conditions:
\begin{description}
    \item[Condition E:] The entries $\mu_{ij}$ are nonnegative integers (a condition on \emph{entries}).
    \item[Condition R:] If $\mu_{ij}>0$, then $j<n$ and $\mu_{i,j+1}=\mu_{ij}-1$ (a condition on \emph{rows}).
    \item[Condition C:] For each $j \in [0,n]$, we have $\sum_{i} \mu_{ij} \leq n-j$ (a condition on \emph{columns}).
\end{description}
The necessity of Condition E is obvious, Condition R comes from considering the Taylor expansion of $P$ at each $\lambda_i$, and Condition C is a consequence of the Fundamental Theorem of Algebra.  For $m=1$, Conditions E and R (which here imply Condition C) are also sufficient for a $1 \times (n+1)$ matrix to be the multiplicity matrix of some polynomial.  For $m>1$, these three conditions are not sufficient; to study the $m=2$ case, we introduced a fourth condition (which is necessary only for $m \leq 2)$:
\begin{description}
    \item[Condition T:] For each $k\in[0,n]$, there are at most $k$ nonzero entries in the last $k+1$ columns (a condition on the \emph{tail}).
\end{description}
Note that Condition T is strictly stronger than Condition C. Our Theorem~\ref{thm:m=2} states that Conditions E, R, T are necessary and sufficient for a $2\times(n+1)$ matrix to be a multiplicity matrix. The relevance of Condition T for $m>2$ is uncertain in general, but we conjecture that when $\Lambda$ is generic, Conditions E, R, and T are necessary and sufficient for a $m \times (n+1)$ matrix to be the multiplicity matrix of some polynomial $P$ with respect to $\Lambda$; see Section~\ref{sec:conclusion}.

Finally we mention that, for example,
for (large) $m=n$, almost all matrices satisfying conditions E, R and C are
\emph{not} multiplicity matrices of polynomials.  For simplicity, consider the case where $m=n$ and $n$ is even.  We produce many matrices satisfying Conditions E, R, and C as follows:  Fix a checkerboard pattern of the $n \times (n+1)$ grid; place $0$'s on all of the white squares, and place up to $n-j$ $1$'s on black squares in the $j$-th column for each $j \in [0,n-1]$.  In the $j$-th column, we have
$$\sum_{0 \leq k \leq n-j} \binom{n/2}{k}$$
ways to place the $1$'s.  For $j<(1/4-\varepsilon)n$, this quantity is at least $(1-o(1))2^{n/2}$ (using Chernoff bounds, for instance).  So the number of matrices produced is at least
$$[(1-o(1))2^{n/2}]^{(1/4-\varepsilon)n}=2^{(1-o(1))n^2/8}$$
(sending $\varepsilon$ slowly to zero).  Comparing this rough lower bound with the quantity $2^{4n\log_2(en)}$ from Theorem~\ref{thm:zero patterns} shows that Conditions E, R, and C are nowhere near sufficient.


\section{Dope Matrices with Two Rows}\label{sec:two-rows}
As observed in~\cite{Nathanson}, we can read off the $i$-th row of $D_P(\Lambda)$ from the Taylor expansion of $P$ at $\lambda_i$: for $P(x)=\sum_{j=0}^n a_j (x-\lambda_i)^j$, we have that $\delta_{ij}=1$ if and only if $a_j=0$.  Thus, for a given $m \times (n+1)$ $\{0,1\}$-matrix $D$ and an $m$-tuple $\Lambda$ of distinct complex numbers, deciding whether or not there is some polynomial $P$ with $D=D_P(\Lambda)$ amounts to studying a system of linear constraints on the coefficients of $P$.  The following result of Gessel and Viennot will do the heavy lifting in our proof of Theorem~\ref{thm:m=2}.

\begin{theorem}[\cite{BinomialDeterminants}*{Corollary 2}]\label{thm:GV}
Let ${\cal G},{\cal H}\subseteq\NN\cup\{0\}$ be finite subsets of the same size.  If $\abs{{\cal G}\cap[0,c]}\le \abs{{\cal H} \cap [0,c]}$ for every $c\in\NN\cup\{0\}$, then the matrix of binomial coefficients $$\left( \binom gh \right)_{g\in G, h\in H}$$ is nonsingular.
\end{theorem}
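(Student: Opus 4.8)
The plan is to prove this via the Lindström--Gessel--Viennot (LGV) lattice-path interpretation of binomial coefficients. Write $\mathcal{G}=\{g_1<\cdots<g_k\}$ and $\mathcal{H}=\{h_1<\cdots<h_k\}$. First I would record that the hypothesis $\abs{\mathcal{G}\cap[0,c]}\le\abs{\mathcal{H}\cap[0,c]}$ for every $c\in\NN\cup\{0\}$ is equivalent to the coordinatewise domination $g_i\ge h_i$ for all $i$: if $g_i\ge h_i$ then $g_i\le c$ forces $h_i\le c$, so $\{i:g_i\le c\}\subseteq\{i:h_i\le c\}$; conversely, if $g_i<h_i$ for some $i$, then taking $c=g_i$ violates the inequality. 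Next I set up the acyclic directed graph on $\ZZ^2$ with unit steps east and north. The number of such paths from $(0,-g)$ to $(h,-h)$ is exactly $\binom{g}{h}$, since a path takes $h$ east steps and $g-h$ north steps (and there are no paths when $g<h$, matching the convention $\binom{g}{h}=0$). Thus, with sources $A_i=(0,-g_i)$ and sinks $B_j=(h_j,-h_j)$, the path-count matrix is precisely $\big(\binom{g_i}{h_j}\big)_{i,j}$.

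I would then invoke the LGV lemma, which for this graph gives $\det\big(\binom{g_i}{h_j}\big)=\sum_\sigma\operatorname{sgn}(\sigma)\,N_\sigma$, where the sum is over permutations $\sigma$ and $N_\sigma$ counts vertex-disjoint families of paths $P_i\colon A_i\to B_{\sigma(i)}$. The lemma itself is proved by the standard sign-reversing involution that swaps the tails of two paths at their first shared vertex: this cancels in pairs every contribution coming from a family with an intersection, leaving only the vertex-disjoint families, and I would include this routine argument for completeness.

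The crux is to show that the only permutation admitting a vertex-disjoint family is the identity. Suppose $\sigma$ has an inversion $i<i'$ with $\sigma(i)>\sigma(i')$. Then $A_i$ lies strictly above $A_{i'}$ on the $y$-axis, while $B_{\sigma(i)}$ lies strictly below and to the right of $B_{\sigma(i')}$ on the antidiagonal $x+y=0$; a discrete Jordan-curve (planarity) argument shows that any two north-east paths with these linked endpoints must share a vertex. Hence every vertex-disjoint family uses $\sigma=\operatorname{id}$, so the determinant equals the nonnegative integer counting vertex-disjoint families $(P_i\colon A_i\to B_i)$. Finally, under $g_i\ge h_i$ this count is positive, witnessed by the explicit ``L-shaped'' paths $P_i$ that run east from $(0,-g_i)$ to $(h_i,-g_i)$ and then north to $(h_i,-h_i)$: their horizontal segments sit at the distinct heights $-g_i$, their vertical segments in the distinct columns $h_i$, and since both $(g_i)$ and $(h_i)$ are strictly increasing a short case check rules out any crossing between $P_i$ and $P_{i'}$. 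Therefore the determinant is a \emph{positive} integer and the matrix is nonsingular. I expect the main obstacle to be making the planarity claim (that linked endpoints force a shared vertex) fully rigorous on the lattice; the existence of the identity family and the LGV involution are comparatively routine.
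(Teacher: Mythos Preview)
Your proof is correct; it is precisely the Lindstr\"om--Gessel--Viennot lattice-path argument. The paper does not supply its own proof of this statement---it quotes the result as Corollary~2 of Gessel and Viennot~\cite{BinomialDeterminants}---so your proposal coincides with the approach in the cited source rather than differing from it.
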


\begin{proof}[Proof of Theorem~\ref{thm:m=2}]
We first show that if $D=(\delta_{ij})$ is a $2 \times (n+1)$ dope matrix, then $D$ satisfies Condition T.  Among all $2 \times (n+1)$ dope matrices that do not satisfy Condition T, choose one such matrix $D=(\delta_{ij})$ that minimizes $n$; we will derive a contradiction.  Assume that $D=D_P(\Lambda)$ for some $P$ and $\Lambda$; as observed in~\cite{Nathanson}, we may assume that $\Lambda=(0,1)$.  The minimality of $n$ implies that
\begin{equation}\label{eq:marginals}
\sum_{j=0}^n (\delta_{1j}+\delta_{2j})=n+1 \quad \text{and} \quad \sum_{j=n-k}^n (\delta_{1j}+\delta_{2j})\leq k \quad \text{for all $k \in [0,n-1]$}.
\end{equation}
In particular, $\delta_{10}=\delta_{20}=1$.  Write
$$P(x)=\sum_{j=0}^n a_j x^j = \sum_{j=0}^n \left(\sum_{k=j}^n \binom kj a_k\right)(x-1)^j,$$
with $a_n \neq 0$ since $P$ has degree $n$.  By the observation at the beginning of this section we have that
\begin{equation}\label{eq:topline}
    a_j = 0 \text{ if and only if } \delta_{1j}=1
\end{equation}
and
\begin{equation}\label{eq:bottomline}
    \sum_{k=j}^n \binom kj a_k = 0 \text{ if and only if } \delta_{2j}=1.
\end{equation}
Let
$${\cal G}:= \{j \in[0,n] : \delta_{1j} = 0\}=\{j : a_j \neq 0\}$$
and
$${\cal H}:=\{j \in[0,n] : \delta_{2j} = 1\} =\{j : \sum_{k=j}^n \binom kj a_k=0\};$$
note that $|\mathcal{G}|=|\mathcal{H}|$ (call this quantity $t$) since $D$ contains the same number of $0$'s and $1$'s (from~\eqref{eq:marginals}).  Write $g_1<g_2<\dots<g_{t}$ and $h_1<\dots<h_{t}$ for the elements of $\mathcal{G}$ and ${\cal H}$, respectively.  Then, after we eliminate the $a_j$'s that we know to equal $0$ from~\eqref{eq:topline}, the constraints in~\eqref{eq:bottomline} are equivalent to the homogeneous system of equations $B\bar a=\bar0$, where $B=(\beta_{kl})_{k,l \in [1,t]}$ is the $t \times t$ matrix with entries $\beta_{kl}=\binom{g_l}{h_k}$ and $\bar a$ is the length-$t$ column vector with entries $a_{j_1}, \ldots, a_{j_t}$.  We will check that $B$ satisfies the conditions of Theorem~\ref{thm:GV}.  For $c\in[0,n-1]$, we have
    \begin{align*}
    \abs{{\cal H} \cap [0,c]} - \abs{{\cal G} \cap [0,c]}
    &= \sum_{j=0}^c \delta_{2j}
    -\sum_{j=0}^c (1-\delta_{1j}) \\
    &=\sum_{j=0}^n (\delta_{1j}+\delta_{2j}) - \left(\sum_{j=c+1}^n (\delta_{1j}+\delta_{2j})\right) -(c+1)\\
    &\geq n+1-(n-(c+1))-(c+1)\\
    &= 1,
\end{align*}
where the inequality uses the latter part of~\eqref{eq:marginals}.  For $c \geq n$, we have
$$    \abs{{\cal H} \cap [0,c]} - \abs{{\cal G} \cap [0,c]}= \sum_{j=0}^n \delta_{2j}
    -\sum_{j=0}^c (1-\delta_{1j})=0,$$
this time using the former part of~\eqref{eq:marginals}.  So we can apply Theorem~\ref{thm:GV} (in fact with room to spare) to the system $B\bar a=\bar 0$, and we conclude that $\bar a=\bar 0$.  But the last entry is $a_{j_t}=a_n$, and this contradicts the assumption that $a_n \neq 0$.

We now show that if $D$ is a $2 \times (n+1)$ $\{0,1\}$-matrix satisfying Condition T, then $D$ is dope.  Let the total number of $1$'s in $D$ be $n-h$, where we know that $h \geq 0$.  Form the $2 \times (n+h)$ matrix $D'$ by prepending $h$ columns of all $1$'s to $D$; note that $D'$ still satisfies Condition T and exactly $n+h$ entries of $D'$ are $1$'s.  Proceeding as in the first half of the proof, we find that there is a (unique, in fact) monic polynomial $P$ of degree $n+h$ such that $D_P(0,1)$ has $1$'s in all of the positions where $D'$ has $1$'s.  Since $D_P(0,1)$ must satisfy Condition T (by the first half of the proof), we see that at most $n+h$ entries of $D_P(0,1)$ equal $1$.  Since $n+h$ entries of $D'$ equal $1$, we conclude that $D_P(0,1)=D'$.  Thus, $D_{P^{(h)}}(0,1)=D$, as desired.
\end{proof}

In the preceding proof, Theorem~\ref{thm:GV} of Gessel and Viennot can be replaced by an application of P\'olya's Theorem~\ref{thm:polya}.  Indeed, one can check that the constraints in Equations~\ref{eq:topline} and~\ref{eq:bottomline} satisfy P\'olya's Condition, and then Theorem~\ref{thm:polya} will give the desired contradiction that $\overline{a}=\overline{0}$; the application of Theorem~\ref{thm:polya} in place of Theorem~\ref{thm:GV} in the second half of the proof is analogous.  We also mention that Theorem~\ref{thm:polya} can be quickly deduced from the proof of Theorem~\ref{thm:m=2} (with Theorem~\ref{thm:GV} as the key input).

We now prove the enumerative results in Corollary~\ref{cor:counting}.  Let $C(n,t)$ denote the number of dope $2 \times (n+1)$ matrices with exactly $t$ entries equal to $1$.

\begin{proposition}\label{prop:refined-count}
For $n\geq 1$ and $0\leq t \leq n+1$, we have
$$C(n,t)=\binom{2n+1}{t}-\binom{2n+1}{t-1}.$$
\end{proposition}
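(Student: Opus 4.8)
The plan is to invoke Theorem~\ref{thm:m=2} to replace ``dope'' by Condition T, and then to realize the Condition-T matrices with exactly $t$ ones as lattice paths counted by the reflection principle. Concretely, I would read the $2(n+1)$ entries of the matrix in the order (column $n$, column $n-1,\dots,$ column $0$), taking the top entry before the bottom entry within each column, and record a step $+1$ for each $0$ and a step $-1$ for each $1$. This is manifestly a bijection between $2\times(n+1)$ $\{0,1\}$-matrices with exactly $t$ ones and $\pm1$-sequences of length $2n+2$ having exactly $t$ down-steps; I write $V_s$ for the partial sum after $s$ steps, so that $V_0=0$ and $V_{2n+2}=(2n+2)-2t$.

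The heart of the argument is the claim that a matrix satisfies Condition T if and only if $V_s\ge 1$ for every $s\in[1,2n+2]$. In one direction, note that after $2m$ steps we have consumed the $m$ rightmost columns, so $V_{2m}=2m-2R_m$, where $R_m$ is the number of ones among the last $m$ columns; the instance of Condition T with $k=m-1$ says precisely that $R_m\le m-1$, which is equivalent to $V_{2m}\ge 2$. Since each $V_{2m}$ has the same parity as $2m$, the inequalities $V_{2m}\ge 2$ and $V_{2m}\ge 1$ coincide, so Condition T is equivalent to positivity at the even indices. To upgrade this to positivity at \emph{all} indices I would observe that the $k=0$ instance of Condition T forces the last column to be all zeros, whence $V_1=1$, and that for $m\ge 2$ an odd partial sum satisfies $V_{2m-1}\ge V_{2m-2}-1\ge 2-1=1$. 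Conversely, positivity at all indices trivially implies positivity at the even indices, hence Condition T.

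With this equivalence in hand, $C(n,t)$ equals the number of $\pm1$-paths of length $2n+2$ with $t$ down-steps all of whose partial sums are $\ge 1$. Such a path must begin with an up-step (otherwise $V_1=-1$), and deleting this forced first step leaves a path with $(2n+2-t)-1$ up-steps and $t$ down-steps that stays $\ge 0$. The reflection principle counts the latter as $\binom{2n+1}{t}-\binom{2n+1}{t-1}$, which is the desired formula. I expect the only delicate point to be the parity bookkeeping in the equivalence above—in particular the treatment of the odd partial sums and of the forced empty last column, which is exactly what accounts for the length dropping from $2n+2$ to $2n+1$ in the final binomial coefficients.
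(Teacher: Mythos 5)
Your proof is correct, and it takes a genuinely different route from the paper's. The paper also reduces to Condition T via Theorem~\ref{thm:m=2}, but then argues by recurrence: it records the boundary values $C(n,-1)=0$, $C(n,0)=1$, and $C(0,t)=0$ for $t>0$, derives $C(n,t)=C(n-1,t)+2C(n-1,t-1)+C(n-1,t-2)$ by conditioning on the first column, and checks the closed form against these. Your argument is instead bijective: the translation of Condition T into the positivity condition $V_s\ge 1$ is carried out correctly (the even indices encode Condition T exactly; the $k=0$ instance gives $V_1=1$; and $V_{2m-1}\ge V_{2m-2}-1\ge 1$ handles the remaining odd indices), and the reflection principle then yields $\binom{2n+1}{t}-\binom{2n+1}{t-1}$ directly. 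Comparing the two: the paper's verification is shorter, and its conditioning technique is reused elsewhere (Proposition~\ref{prop:B}, and the recurrence mentioned after Conjecture~\ref{conj:generic}); but it presupposes the answer, and, as literally stated, the recurrence fails at the boundary $t=n+1$ --- for instance $C(2,3)=0$ while $C(1,3)+2C(1,2)+C(1,1)=2$ --- because prepending a column to a Condition-T matrix can violate the $k=n$ instance of Condition T, so one must separately invoke the fact that $C(n,t)=0$ for $t>n$. Your path argument needs no such case split: the formula is derived rather than verified, its ballot-number shape is explained by the bijection, and the degenerate case $t=n+1$ is covered uniformly since the reflection count vanishes there.
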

\begin{proof}
We have $C(n,-1)=0$, $C(n,0)=1$ (the all-$0$'s matrix), and $C(0,t)=0$ for $t>0$. The result now follows from the recurrence
$$  C(n,t)=C(n-1,t)+2C(n-1,t-1)+C(n-1,t-2),$$
which comes from conditioning on the first column.
\end{proof}

Let $B(n,2s)$ denote the number of dope $2 \times (n+1)$ matrices with exactly $2s$ entries equal to $1$ and such that the two rows are identical.

\begin{proposition}\label{prop:B}
For $n \geq 1$ and $0 \leq 2s \leq n+1$, we have
$$B(n,2s)=\binom{n}{s}-\binom{n}{s-1}.$$
\end{proposition}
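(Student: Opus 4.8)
The plan is to exploit the fact that a $2\times(n+1)$ matrix with two identical rows is completely determined by a single row $r=(r_0,\dots,r_n)\in\{0,1\}^{n+1}$, and then to reinterpret the doping condition for such a row as a classical ballot condition.

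First I would invoke Theorem~\ref{thm:m=2}: the matrix is dope exactly when it satisfies Condition T. Since the two rows coincide, each column $j$ contributes either $0$ or $2$ nonzero entries, so the number of nonzero entries in the last $k+1$ columns is $2\sum_{j=n-k}^n r_j$. Hence Condition T becomes $\sum_{j=n-k}^n r_j\le \lfloor k/2\rfloor$ for every $k\in[0,n]$; taking $k=0$ forces $r_n=0$. Requiring the row to contain exactly $s$ ones (so that the matrix has $2s$ ones), I have reduced the problem to counting the rows $r$ with $\sum_j r_j=s$ that obey these tail inequalities.

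Next I would reverse the row, setting $c_i:=r_{n-i}$ for $i\in[0,n]$, which turns the tail constraints into prefix constraints: $\sum_{i=0}^k c_i\le\lfloor k/2\rfloor$ for all $k$, with $c_0=0$ forced. Because $c_0=0$, these are precisely the constraints $\sum_{i=1}^j c_i\le\lfloor j/2\rfloor$ on the length-$n$ sequence $(c_1,\dots,c_n)$, which says that in every prefix of $(c_1,\dots,c_n)$ the number of ones is at most the number of zeros. Thus $B(n,2s)$ equals the number of $\{0,1\}$-sequences of length $n$ with $s$ ones in which every prefix has at least as many zeros as ones.

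Finally I would evaluate this count by the reflection principle (the ballot problem): the total number of length-$n$ sequences with $s$ ones is $\binom{n}{s}$, and the reflected image of the ``bad'' sequences (those whose running count of ones strictly exceeds the count of zeros at some prefix) is in bijection with the sequences having $s-1$ ones, so the number of good sequences is $\binom{n}{s}-\binom{n}{s-1}$, which is the claimed formula. The only point requiring care --- and the place I expect the argument to feel delicate rather than genuinely hard --- is tracking the floors and parities so that the ballot inequality matches Condition T exactly, together with the boundary case $2s=n+1$ (that is, $n$ odd and $s=(n+1)/2$), where the full sequence already forces $s\le\lfloor n/2\rfloor$ to fail; there the count is $0$, in agreement with $\binom{n}{s}-\binom{n}{s-1}=0$.
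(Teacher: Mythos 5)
Your proof is correct, and it takes a genuinely different route from the paper's. The paper handles Proposition~\ref{prop:B} exactly as it handles Proposition~\ref{prop:refined-count}: conditioning on the first column yields the recurrence $B(n,2s)=B(n-1,2s)+B(n-1,2s-2)$, which together with the boundary values $B(n,0)=1$ and $B(0,2s)=0$ for $s>0$ forces the formula via Pascal's rule. You instead use Theorem~\ref{thm:m=2} (on which the paper's recurrence also implicitly rests, since one needs Condition T to justify the conditioning) to translate the problem into a ballot problem: with identical rows each column contributes $0$ or $2$ ones, so Condition T reads $\sum_{j=n-k}^{n} r_j \le \lfloor k/2\rfloor$, and after reversing the row and discarding the forced entry $c_0=0$ this says precisely that every prefix of $(c_1,\dots,c_n)$ has at least as many zeros as ones; the reflection principle then gives $\binom{n}{s}-\binom{n}{s-1}$. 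All of these steps check out, including the boundary case $2s=n+1$, where the two binomial coefficients coincide and the count is $0$. Your argument buys a transparent bijective explanation for why ballot numbers (the Catalan triangle) count identical-row dope matrices, and it produces the formula rather than merely verifying a guessed one; the paper's argument buys brevity and uniformity with the proof of Proposition~\ref{prop:refined-count}. One small precision worth adding: if you reflect the \emph{suffix} after the first violation, the bad sequences map to sequences with $n-s+1$ ones rather than $s-1$; reflecting the \emph{prefix} up to and including the first violation is what gives exactly $s-1$ ones, and this map is invertible precisely because $2s\le n+1$ guarantees that any sequence with $s-1$ ones has some prefix with strictly more zeros than ones. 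Either convention yields the count $\binom{n}{s-1}$, since $\binom{n}{n-s+1}=\binom{n}{s-1}$, so your formula stands as written.
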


\begin{proof}
We have $B(n,0)=1$ (the all-$0$'s matrix) and $B(0,2s)=0$ for $s>0$.  The result now follows from the recurrence $$B(n,2s)=B(n-1,2s)+B(n-1,2s-2),$$
which comes from conditioning on the first column.
\end{proof}

We are ready to prove Corollary~\ref{cor:counting}.

\begin{proof}[Proof of Corollary~\ref{cor:counting}]
The first statement of the corollary follows from Proposition~\ref{prop:refined-count} by telescoping.  The second statement is precisely from the $t=n$ case of Proposition~\ref{prop:refined-count}.  For the third statement, note that the number of $2 \times (n+1)$ dope matrices modulo the action of swapping the two rows is
$$\frac{1}{2} \bigg[\sum_t C(n,t)+\sum_s B(n,2s) \bigg];$$
substituting from Propositions~\ref{prop:refined-count} and~\ref{prop:B} and collapsing the telescoping sums gives the result.
\end{proof}

\section{Upper Bounds}\label{sec:upper}
We now turn to upper bounds on the number of dope matrices.  We begin with Corollary~\ref{cor:counting big}, which is a consequence of Corollary~\ref{cor:counting}.

\begin{proof}[Proof of Corollary~\ref{cor:counting big}]
Note that each pair of rows of a dope matrix forms a $2$-row dope matrix; Corollary~\ref{cor:counting} tells us that there are at most $\binom{2n+1}{n}$ possibilities for such a $2$-row dope matrix.  We wish to apply Shearer's Inequality (see~\cite{AlonSpencer}*{Proposition 15.7.5}).  Let $X=\{0,1\}^{n+1}$, and consider a $m \times (n+1)$ dope matrix $D$ as an element of $X^m$, where the $i$-th row of $D$ corresponds to the $i$-th copy of $X$; in this way we also consider $\mathcal{D}_n^m$ as a subset of $X^m$.  For each choice of $1 \leq i<i' \leq m$, the projection of $\mathcal{D}_m^n$ onto the $i$-th and $i'$-th copies of $X$ has size at most $\binom{2n+1}{n}$.  By Shearer's Inequality applied to all such pairs $(i, i')$, we find that
$$|\mathcal{D}_m^n|^{m-1} \leq \prod_{1 \leq i <i' \leq m}\binom{2n+1}{n}=\binom{2n+1}{n}^{m(m-1)/2}$$
and hence
$$|\mathcal{D}_m^n| \leq \binom{2n+1}{n}^{m/2},$$
as desired.  The second estimate in the Corollary follows from plugging in Robbins' factorial bound~\cite{Robbins}.
\end{proof}

We now turn to Theorems~\ref{thm:zero patterns} and~\ref{thm:zero patterns2}.  If $f=(f_1, \ldots, f_a)$ is a sequence of polynomials in $b$ variables over the field $K$, then the \emph{zero pattern} of $f$ evaluated at the point $u \in K^b$ is the set 
    $$Z_f(u)=\{i \in [1,a]: f_i(u)=0\}.$$  
We let $Z_f$ denote the total number of distinct zero patterns that appear as $u$ ranges over $K^b$.  We will use the following result of R\'onyai, Babai, and Ganapathy~\cite{RBG}.

\begin{theorem}[\cite{RBG}*{Theorem 1.1}]\label{thm:RBG}
Let $f=(f_1, \ldots, f_a)$ be a sequence of polynomials in $b$ variables over the field $K$, and let $d_i$ denote the degree of $f_i$.  Then $$Z_f \leq \binom{b+\sum_{i=1}^a d_i}{b}.$$
\end{theorem}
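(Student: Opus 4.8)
The plan is to prove the bound by the linear-algebra method: to each realized zero pattern I will attach a polynomial, show that these polynomials are linearly independent, and then bound their number by the dimension of the ambient space of bounded-degree polynomials. First I would dispose of a trivial edge case: if some $f_i$ is identically zero, then the index $i$ belongs to every zero pattern $Z_f(u)$, so deleting $f_i$ from the list changes neither the collection of distinct patterns nor the quantity $Z_f$, and it does not increase the bound; hence I may assume every $f_i$ is a nonzero polynomial.

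Next, for each zero pattern $\sigma \subseteq [1,a]$ that is actually realized, I would fix a witness point $u_\sigma \in K^b$ with $Z_f(u_\sigma)=\sigma$, and define the polynomial
$$g_\sigma = \prod_{i \notin \sigma} f_i.$$
The crucial evaluation identity is that $g_\sigma(u_\tau) \neq 0$ if and only if $\tau \subseteq \sigma$. Indeed, $g_\sigma(u_\tau)\neq 0$ means that $f_i(u_\tau)\neq 0$ for every $i \notin \sigma$; since $Z_f(u_\tau)=\tau$, this says that no index lying outside $\sigma$ lies in $\tau$, i.e.\ that $\tau \subseteq \sigma$. In particular $g_\sigma(u_\sigma)\neq 0$.

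The heart of the argument is the linear independence of the family $\{g_\sigma\}$. Suppose $\sum_\sigma c_\sigma g_\sigma = 0$ with not all $c_\sigma$ zero, and let $\tau$ be a pattern that is \emph{maximal} with respect to inclusion among those with $c_\tau \neq 0$. Evaluating the relation at the witness point $u_\tau$ and using the identity above, every term with $\tau \not\subseteq \sigma$ vanishes, so only the coefficients of patterns $\sigma \supseteq \tau$ survive; by the maximality of $\tau$, the sole survivor is $\sigma=\tau$ itself, whence
$$0 = \sum_\sigma c_\sigma g_\sigma(u_\tau) = c_\tau\, g_\tau(u_\tau).$$
Since $g_\tau(u_\tau)\neq 0$, this forces $c_\tau=0$, a contradiction. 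Thus the $g_\sigma$ are linearly independent. Finally, each $g_\sigma$ has degree $\sum_{i \notin \sigma} d_i \leq \sum_{i=1}^a d_i$, so all the $g_\sigma$ lie in the space of polynomials in $b$ variables of total degree at most $\sum_i d_i$, which has dimension $\binom{b+\sum_i d_i}{b}$. The number of distinct zero patterns equals the number of independent polynomials $g_\sigma$, so it is at most this dimension, which is exactly the claimed bound.

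The step I expect to require the most care is the linear-independence argument: one must pick the correct ordering of the patterns—maximal rather than minimal under inclusion—so that the triangular structure of the evaluation ``matrix'' $\big(g_\sigma(u_\tau)\big)$ genuinely isolates a single coefficient, and one must verify that the witness points interact with the products $g_\sigma$ exactly as the inclusion relation prescribes. By contrast, the degree bookkeeping and the monomial count $\binom{b+D}{b}$ for the dimension of the degree-$\leq D$ space are entirely routine.
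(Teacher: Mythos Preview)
Your proof is correct and is precisely the linear-algebra argument of R\'onyai--Babai--Ganapathy: associate to each realized pattern the product of the $f_i$ outside the pattern, use the triangular evaluation structure with respect to inclusion to get linear independence, and bound by the dimension of the degree-$\leq \sum_i d_i$ polynomial space. Note, however, that the present paper does not give its own proof of this statement; Theorem~\ref{thm:RBG} is quoted from \cite{RBG} as a black box and applied in the proof of Theorem~\ref{thm:zero patterns}, so there is no in-paper proof to compare against---what you have written is essentially the original proof from the cited source.
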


Translating from the language of dope matrices to the language of zero patterns gives Theorems~\ref{thm:zero patterns} and~\ref{thm:zero patterns2}, as follows.
\begin{proof}[Proof of Theorem~\ref{thm:zero patterns}]
Let $K=\mathbb{C}$.  Note that the last column of a dope matrix always has all $0$'s.  Note also that we still obtain all of the dope matrices if we restrict our attention to monic polynomials and to tuples whose first two entries are $0,1$.  Let $P(x)=x^n+\sum_{j=0}^{n-1} a_j x^j$ and $\Lambda=(0,1,\lambda_3, \ldots, \lambda_m)$ be the general forms of such a monic polynomial of degree $n$ and an $m$-tuple starting with $0,1$, respectively.  Consider $$f=(P^{(j)}(\lambda_i))_{i \in [1,m], j \in [0,n-1]}$$
as an $mn$-tuple of polynomials in the $m+n-2$ variables $a_0, \ldots, a_{n-1}, \lambda_3, \ldots, \lambda_m$.  Then for each choice of $u=(a_0, \ldots, a_{n-1}, \lambda_3, \ldots, \lambda_m)$, the dope matrix $D_P(\Lambda)$ is determined by the zero pattern $Z_f(u)$.  In particular, the number of $m \times (n+1)$ dope matrices is at most $Z_f$.  For $i \leq 2$, the polynomial $f^{(j)}(\lambda_i)$ has degree $1$.  For $i>2$, the polynomial $f^{(j)}(\lambda_i)$ has degree $n-j$.  So the sum of the degrees of the polynomials in $f$ is
$$2n+(m-2)\sum_{j=0}^{n-1}(n-j) =2n+\frac{(m-2)n(n+1)}{2}.$$
Now Theorem~\ref{thm:RBG} gives $Z_f \leq \binom{3n+m-2+(m-2)n(n+1)/2}{m+n-2}$, as desired.
\end{proof}

We remark that the number of dope matrices is in fact slightly smaller than $Z_f$ (in the notation of the above proof) because of the constraint that the $\lambda_i$'s be distinct.  It does not appear that this observation yields a significant improvement to Theorem~\ref{thm:zero patterns}.  
For Theorem~\ref{thm:zero patterns2} (counting dope matrices with $\Lambda$ fixed), we will use the following strengthening of Theorem~\ref{thm:RBG} for the case where the polynomials $f_i$ are linear.

\begin{theorem}[\cite{RBG}*{Theorem 1.2}]\label{thm:RBG2}
Let $f=(f_1, \ldots, f_a)$ be a sequence of linear polynomials in $b$ variables over the field $K$.  Then
$$Z_f \leq \sum_{i=0}^b \binom{a}{i}.$$
\end{theorem}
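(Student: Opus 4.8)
The plan is to prove the bound by constructing an explicit injection from the set of zero patterns realized by $f$ into the collection of subsets of $[a]$ of size at most $b$; since there are exactly $\sum_{i=0}^b \binom ai$ such subsets, this yields the claimed inequality directly. The geometric picture is that each $f_i$ is an affine function on $K^b$, so its zero set $H_i=\{u : f_i(u)=0\}$ is an affine hyperplane (possibly empty, if $f_i$ is a nonzero constant, or all of $K^b$, if $f_i\equiv 0$), and a zero pattern simply records which of these hyperplanes pass through a given point.

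First I would associate, to each realized pattern $S=Z_f(u)$, the affine flat $F_S=\bigcap_{i\in S}H_i$. The crucial structural observation is that $S$ is recovered from $F_S$ as its ``closure'': since the witness $u$ lies in $F_S$, one checks directly that $i\in S$ if and only if $f_i$ vanishes identically on $F_S$ (that is, $F_S\subseteq H_i$). In particular $F_S$ is nonempty, so $\operatorname{codim} F_S\le b$. Writing $\ell_i$ for the linear part of $f_i$, I would then extract a subfamily $B_S\subseteq S$ whose linear parts $\{\ell_i : i\in B_S\}$ form a basis of $\operatorname{span}\{\ell_i : i\in S\}$; then $|B_S|=\operatorname{codim} F_S\le b$, and because $\bigcap_{i\in B_S}H_i$ is a nonempty affine subspace with the same direction space as $F_S$ and containing it, the two coincide: $\bigcap_{i\in B_S}H_i=F_S$.

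The injection is $S\mapsto B_S$. To verify injectivity, note that $B_S$ determines $F_S=\bigcap_{i\in B_S}H_i$, which in turn determines $S$ by the closure property above; hence $B_S=B_{S'}$ forces $F_S=F_{S'}$ and therefore $S=S'$, regardless of how the (non-canonical) choice of basis $B_S$ was made. Counting the targets gives $Z_f\le\sum_{i=0}^b\binom ai$, as desired, and every step is valid over an arbitrary field $K$. I expect the main technical point to be the flat-extraction step: one must argue carefully, via the rank of the linear parts, that a subfamily of size exactly $\operatorname{codim} F_S$ cuts out the same flat, while correctly accommodating the degenerate functions — an identically-zero $f_i$ lies in every pattern but contributes nothing to the codimension and is simply never placed in $B_S$, whereas a nonzero-constant $f_i$ has empty zero set and so cannot belong to any realized pattern at all. (An alternative route is the linear-algebra ``witness polynomial'' method, assigning to each $S$ the product $\prod_{i\notin S}f_i$ and proving these are linearly independent, but bounding the dimension of their span by $\sum_{i=0}^b\binom ai$ requires an extra degree-reduction argument that the injection above sidesteps.)
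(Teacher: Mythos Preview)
The paper does not prove this statement: Theorem~\ref{thm:RBG2} is quoted verbatim from R\'onyai--Babai--Ganapathy and simply applied in the proof of Theorem~\ref{thm:zero patterns2}. So there is no ``paper's own proof'' to compare against.

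That said, your argument is correct. The key steps---that a realized pattern $S$ is recoverable from the flat $F_S$ via the closure operation $S=\{i:F_S\subseteq H_i\}$, that a basis $B_S$ of the linear parts $\{\ell_i:i\in S\}$ has size $\operatorname{codim} F_S\le b$, and that $\bigcap_{i\in B_S}H_i=F_S$ because the two nonempty affine flats have the same direction space and one contains the other---all go through over an arbitrary field, and the degenerate cases ($f_i\equiv 0$ or $f_i$ a nonzero constant) are handled exactly as you describe.

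For context, the original proof in \cite{RBG} follows the ``witness polynomial'' route you mention at the end: one shows that the products $p_S=\prod_{i\notin S}f_i$ are linearly independent and then bounds the dimension of their span. Your geometric/matroidal injection is a genuinely different and arguably more transparent argument in the linear case; it trades the linear-algebra independence lemma for the elementary fact that a nonempty affine flat in $K^b$ is cut out by at most $b$ of the defining hyperplanes. Both arguments yield the same bound, but yours makes the combinatorics (subsets of size at most $b$) visible at the level of the injection itself rather than as a dimension count.
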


\begin{proof}[Proof of Theorem~\ref{thm:zero patterns2}]
We proceed as in the proof of Theorem~\ref{thm:zero patterns}, but this time we consider $P^{(j)}(\lambda_i)$ as a polynomial in only the $n$ variables $a_0, \ldots, a_{n-1}$ (since the $\lambda_i$'s are fixed).  Then each $P^{(j)}(\lambda_i)$ is linear, and Theorem~\ref{thm:RBG2} gives $Z_f \leq \sum_{i=0}^n \binom{mn}{i}$  The last estimate in the Theorem follows from standard entropy bounds on sums of binomial coefficients (e.g.,~\cite{AlonSpencer}*{Corollary 15.7.3}).
\end{proof}

\section{Generic $\Lambda$}\label{sec:generic}

We require some setup before we prove Theorem~\ref{thm:generic}.  Fix a tuple $\Lambda=(\lambda_1, \ldots, \lambda_m)$ and a natural number $n$.  Consider $P(x)=a_nx^n+\cdots+a_0$ as the general form of a polynomial of degree $n$, where the $a_k$'s are viewed as variables.  We wish to study how the $m \times (n+1)$ dope matrix $D_P(\Lambda)$ depends on the choice of the $a_k$'s.  To each element $(i,j) \in [1,m] \times [0,n]$, we associate the linear form $L_{i,j}$ in the variables $a_0, \ldots, a_n$ given by
$$\sum_{k=j}^n \binom{k}{j} \lambda_i^{k-j} a_k.$$
Notice that $P^{(j)}(\lambda_i)$ vanishes if and only if $L_{i,j}=0$; in particular, in the dope matrix $D_P(\Lambda)$, the set of positions with $1$'s is precisely
$$\{(i,j) \in [1,m] \times [0,n]: L_{i,j}=0\}.$$
We also view $L_{i,j}$ as an element of $\mathbb{C}^{n+1}$ with entries $\left(\binom{k}{j} \lambda_i^{k-j} \right)_{0 \leq k \leq n}$ (note that $\binom{k}{j}$ vanishes for $k<j$ and hence is harmless).

Let $D$ be a $m \times (n+1)$ $\{0,1\}$-matrix, and let $E_D \subseteq [1,m] \times [0,n]$ denote the set of positions of $D$ with $1$'s.  Let $v_0=(0, \ldots, 0, 1) \in \mathbb{C}^{n+1}$.  Then, by the preceding discussion, there exists a degree-$n$ polynomial $P$ satisfying $D=D_P(\Lambda)$ if and only if there is a subspace $V$ of $\mathbb{C}^{n+1}$ such that $v_0 \notin V$ and
$$\{(i,j) \in [1,m] \times [0,n]: L_{i,j} \in V\}=E_D.$$

(The condition $v_0\notin V$ corresponds to the requirement that $P$ has degree exactly $n$.)  If such a $V$ exists, we may of course take it to equal $\Span\{L_{i,j}: (i,j) \in E_D\}$; denote this subspace by $V_D$.  We are now ready to prove Theorem~\ref{thm:generic}.

\begin{proof}[Proof of Theorem~\ref{thm:generic}]
For each $D \in \mathcal{D}_n(\Lambda)$, consider the associated subspace $V_D$, and fix some $F_D \subseteq E_D$ so that $\{L_{i,j}: (i,j) \in F _D\}$ is a basis for $V_D$.  It follows that $\{L_{i,j}: (i,j) \in F _D\} \cup \{v_0\}$ is linearly independent.  Let $\Lambda'=(\lambda'_1, \ldots, \lambda'_m)$ be a generic $m$-tuple, with associated linear forms $L'_{i,j}$.

We claim that the set
$$\{L'_{i,j}: (i,j) \in F_D\} \cup \{v_0\}$$
is also linearly independent.  To see this, consider $\{L_{i,j}: (i,j) \in F_D\} \cup \{v_0\}$ as the rows of a $(\dim(V)+1)\times(n+1)$ matrix $M$, and consider $\{L'_{i,j}: (i,j) \in F_D\} \cup \{v_0\}$ as the rows of a $(\dim(V)+1)\times(n+1)$ matrix $M'$.  The linear independence of $\{L_{i,j}: (i,j) \in F_D\} \cup \{v_0\}$ guarantees that $M$ has a $(\dim(V)+1) \times (\dim(V)+1)$ submatrix $N$ with full rank, i.e., nonzero determinant.  Let $N'$ be the corresponding submatrix of $M'$.  It clearly suffices to show that $\det(N') \neq 0$.  Suppose instead that $\det(N')=0$.  Since the $\lambda'_i$'s are algebraically independent, we may view this equation as an equality of polynomials in the variables $\lambda'_1, \ldots, \lambda'_m$; in particular, $\det(N')$ is the zero polynomial.  This implies, upon replacing the $\lambda'_i$'s by the corresponding $\lambda_i$'s, that $\det(N)=0$, and this contradiction establishes the claim.

Let $$V'_D=\Span\{L'_{i,j}: (i,j) \in F_D\}.$$  The previous claim shows that $v_0 \notin V'_D$ and that $\{L'_{i,j}: (i,j) \in F_D\}$ is a basis.  Let $$E'_D=\{(i,j): L'_{i,j} \in V'_D\},$$ and define $D' \in \mathcal{D}_n(\Lambda')$ to be the dope matrix with $1$'s in the positions $E'_D$.  This procedure gives us an (admittedly non-canonical) map $\psi: \mathcal{D}_n(\Lambda) \to \mathcal{D}_n(\Lambda')$ via $D \mapsto D'$; it remains to show that $\psi$ is injective.  Suppose $\psi(D)=\psi(\widetilde{D})$, i.e., $V'_D=V'_{\widetilde{D}}$.  For each $(i_0,j_0) \in F_D$, we can express $L'_{i_0,j_0}$ as a linear combination of $\{L'_{i,j}: (i,j)\in F_{\widetilde{D}}\}$.  We claim that then $L_{i_0,j_0}$ is a linear combination of $\{L_{i,j}: (i,j)\in F_{\widetilde{D}}\}$.  Indeed, if this were not the case, then the (multi-)set $$\{L_{i,j}: (i,j)\in F_{\widetilde{D}}\} \cup \{L_{i_0,j_0}\}$$ would be linearly independent; viewing $\{L_{i,j}: (i,j)\in F_{\widetilde{D}}\} \cup \{L_{i_0,j_0}\}$ as the rows of a $(|F_{\widetilde{{D}}}|+1) \times (n+1)$ matrix $M$, we can then find a $(|F_{\widetilde{{D}}}|+1) \times (|F_{\widetilde{{D}}}|+1)$ submatrix $N$ with full rank, i.e., nonzero determinant.  Let $M'$ be the $(|F_{\widetilde{{D}}}|+1) \times (n+1)$ matrix with rows given by $\{L'_{i,j}: (i,j)\in F_{\widetilde{D}}\} \cup \{L'_{i_0,j_0}\}$, and let $N'$ be the corresponding submatrix of $M'$.  By the argument in the previous paragraph, we deduce $\det(N') \neq 0$ from $\det(N) \neq 0$.  But this contradicts $L'_{i_0,j_0}$ being a linear combination of $\{L'_{i,j}: (i,j)\in F_{\widetilde{D}}\}$, which establishes the claim.  It follows that $V_D \subseteq V_{\widetilde{D}}$.  The same argument gives $V_{\widetilde{D}}\subseteq V_D$, and hence $V_D=V_{\widetilde{D}}$.  So $D=\widetilde{D}$, which establishes the injectivity of $\psi$.

It remains only to characterize equality cases.  Suppose $|\mathcal{D}_n(\Lambda)|=|\mathcal{D}_n^{\gen(m)}|$.  Then the map $\psi$ is a bijection.  Moreover, for each $D \in \mathcal{D}_n(\Lambda)$, the corresponding matrix $D'=\psi(D)$ is independent of the choice of $F_D$.  We will show that in fact $D=D'$.  Since each element of $E_D$ is part of a basis for $V_D$, we see that $E_{D} \subseteq E_{D'}$ (where $E_{D'}=E'_D$ unambiguously because this doesn't depend on the choice of $F_D$).  For the reverse inclusion, let $(i_0, j_0) \in E_{D'}$ be any element, and fix some choice of $F_D$.  Then $L'_{i_0, j_0}$ is a linear combination of $\{L'_{i,j}: (i,j) \in F_D\}$.  By the argument in the previous paragraph, we see that also $L_{i_0, j_0}$ is a linear combination of $\{L_{i,j}: (i,j) \in F_D\}$, and thus $(i,j) \in E_D$.  So $E_{D'}\subseteq E_{D}$ and hence $E_D=E_{D'}$; we conclude that $D=D'$, as desired.  This completes the proof.
\end{proof}

\section{The Extension Problem}\label{sec:extension}
Our resolution of the extension problem follows from a quick application of the Chinese Remainder Theorem for polynomial rings.

\begin{proof}[Proof of Theorem~\ref{thm:extension}]
Write $D=(\delta_{ij})$ and $\Lambda=(\lambda_1, \ldots, \lambda_m)$.  For each $1 \leq i \leq m$, define the set
$$A_i=\{j \in [0,n]: \delta_{ij}=0\}$$
and the polynomial
$$P_i(x)=\sum_{j \in A_i}(x-\lambda_i)^j.$$
By the observation at the beginning of Section~\ref{sec:two-rows}, it suffices to find a polynomial $P$ of degree at least $n$ such that $P \equiv P_i \pmod{(x-\lambda_i)^{n+2}}$ for every $i$.  By the Chinese Remainder Theorem (see, e.g.,~\cite{AT}*{Proposition 1.10}), we can find such a $P$ with degree at most $m(n+2)$.  If $P$ has degree at least $n$, then we are done; if $P$ has degree strictly smaller than $n$, then we can ensure that $P$ has degree exactly $m(n+2)$ by adding $\prod_{i=1}^m (x-\lambda_i)^{n+2}$ to $P$.  Either way, $D$ appears as the left-most portion of $D_P(\Lambda)$, as desired.  The ``moreover'' statement is clear from the proof.
\end{proof}

The computation at the end of Section~\ref{sec:multiplicity} shows that Theorem~\ref{thm:extension} remains close to tight even if we require $D$ to ``come from'' a matrix $M$ satisfying Conditions E, R, and C (as suggested in Nathanson's original question); call such a $\{0,1\}$-matrix an \emph{ERC-matrix}.  Suppose that for all $n \times (n+1)$ ERC-matrices, we can solve the extension problem with a polynomial of degree at most $N$.  Write $P(x)=a_Nx^N+\cdots+a_0$, where now we allow $a_N$ to be $0$ (so that we also capture polynomials of degree smaller than $N$).  Since each $n \times (n+1)$ ERC matrix has all $0$'s in the last column, it is completely determined by its left-most $n \times n$ block.  By Theorem~\ref{thm:RBG2}, the number of possibilities for the left-most $n \times n$ block of $D_P(\Lambda)$ is at most
$$\sum_{i=0}^{N+1}\binom{n^2}{i} \leq 2^{n^2 H((N+1)/n^2)}$$
(see the proof of Theorem~\ref{thm:zero patterns2}).  Comparison with the computation of Section~\ref{sec:multiplicity} gives that 
$$2^{(1-o(1))n^2/8} \leq 2^{n^2 H((N+1)/n^2)},$$
and the monotonicity of $H(p)$ for $0<p<1/2$ implies that
$$N \geq (0.017-o(1))n^2;$$
this shows that even in this setting Theorem~\ref{thm:extension} is tight up to a constant factor.

\section{Open Problems}\label{sec:conclusion}
Following Nathanson's lead, we close with several questions that we haven't resolved.  Nathanson characterized $\mathcal{D}_n(\lambda)$ for all complex numbers $\lambda$, and in Theorem~\ref{thm:m=2} we characterized $\mathcal{D}_n(\lambda_1, \lambda_2)$ for all pairs of distinct complex numbers $\lambda_1, \lambda_2$.  In each of these settings, the particular choice of $\Lambda$ is immaterial since all tuples $\Lambda$ are generic.  When $\Lambda$ is an $m$-tuple for $m \geq 3$, however, the set $\mathcal{D}_n(\Lambda)$ does depend on the particular choice of $\Lambda$, since not all triples are generic.  Although it seems hopeless to characterize $\mathcal{D}_n(\Lambda)$ for all $\Lambda$, we offer the following simple-to-state-conjecture for the case where $\Lambda$ is generic.

\begin{conjecture}\label{conj:generic}
Let $m \geq 3$.  Then $\mathcal{D}_n^{\gen(m)}$ consists of exactly the $m \times (n+1)$ $\{0,1\}$-matrices satisfying the following condition: For each $k \in [0,n]$, there are at most $k$ nonzero entries in the last $k+1$ columns.
\end{conjecture}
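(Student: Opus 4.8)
The plan is to work entirely within the linear-algebraic framework of Section~\ref{sec:generic}. Recall that for a fixed $\Lambda$ we attach to each position $(i,j)$ the form $L_{i,j}\in\mathbb{C}^{n+1}$ with entries $\big(\binom{k}{j}\lambda_i^{k-j}\big)_{0\le k\le n}$, that $L_{i,n}=v_0:=(0,\dots,0,1)$ for every $i$, and that a $\{0,1\}$-matrix $D$ lies in $\mathcal{D}_n(\Lambda)$ if and only if its set $E_D$ of $1$-positions equals $\{(i,j):L_{i,j}\in V_D\}$, where $V_D=\Span\{L_{i,j}:(i,j)\in E_D\}$ and $v_0\notin V_D$. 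I would split the conjecture into its two inclusions and treat them separately.

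For the \emph{necessity} direction (every $D\in\mathcal{D}_n^{\gen(m)}$ satisfies Condition T) I would first use the elementary fact that the last $k+1$ columns of $D_P(\Lambda)$ are exactly $D_{P^{(n-k)}}(\Lambda)$, the dope matrix of the degree-$k$ polynomial $P^{(n-k)}$ with respect to the same $\Lambda$. Thus Condition T at level $k$ asserts precisely that a degree-$k$ dope matrix has at most $k$ ones, and the necessity direction for all $n$ reduces to the single statement
\[(\star)\qquad\text{for generic }\Lambda,\ \text{every degree-}d\text{ dope matrix }D_Q(\Lambda)\text{ has at most }d\text{ ones}.\]
In the language of forms, a degree-$d$ dope matrix with $t$ ones corresponds to $t$ of the forms $L_{i,j}\in\mathbb{C}^{d+1}$ lying in a common subspace avoiding $v_0$; such a subspace has dimension at most $d$, so $t=d+1$ forces those $d+1$ forms to be linearly dependent while their span still misses $v_0$. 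Hence $(\star)$ is equivalent to the assertion that, for generic $\Lambda$, the span of any linearly dependent $(d+1)$-subset of the degree-$d$ forms \emph{contains} $v_0$.

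For the \emph{sufficiency} direction, given $D$ satisfying Condition T I would attempt to realize $V_D=\Span\{L_{i,j}:(i,j)\in E_D\}$ as the required flat. One checks that Condition T is exactly the \textbf{Pólya condition} on $E_D\cup\{v_0\}$ relative to the support filtration $W_j=\Span\{e_j,\dots,e_n\}$, using that $L_{i,j}\in W_j$; for $m=2$ this equivalence is literally the content of Pólya's Theorem~\ref{thm:polya}, which is how Theorem~\ref{thm:m=2} was obtained. The goal is then to show that for generic $\Lambda$ a Condition-T set $E_D$ has linearly independent forms with $v_0\notin V_D$ and with no outside form falling into $V_D$, so that $E_D$ is genuinely a flat avoiding $v_0$. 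The first two properties would follow from a generic-poisedness statement for Pólya configurations, and the closedness of $E_D$ from applying the same statement to each candidate extra position $(i',j')$.

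The \textbf{main obstacle} is precisely the interaction with \emph{singular} Birkhoff-interpolation schemes. Once $m\ge 3$, the Pólya condition is necessary but famously \emph{not} sufficient for poisedness: there exist Pólya configurations whose interpolation determinant vanishes identically in the nodes (the classical odd-supported-block examples). For such a configuration the associated $d+1$ forms are linearly dependent for \emph{every} $\Lambda$, and the conjecture hinges on two delicate facts about exactly these configurations—that the leading-coefficient functional $v_0$ always lies in their span (which would rescue $(\star)$, hence necessity), and that the remaining, non-singular Pólya configurations are precisely the ones realized as flats avoiding $v_0$ (which would give sufficiency). Equivalently, one must prove that no Pólya-singular configuration avoids $v_0$. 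Controlling these generically-singular schemes is the step that genuinely goes beyond the Gessel--Viennot/Pólya input available for $m=2$, and it is the reason the statement is offered here as a conjecture rather than a theorem.
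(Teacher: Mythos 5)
A point of status before anything else: the statement you have attempted is Conjecture~\ref{conj:generic}, and the paper contains \emph{no proof of it}. The authors present it as open, supported only by explicit computations of $\mathcal{D}_n^{\gen(m)}$ for $m=3$ and $n\le 6$, by the observation that Theorem~\ref{thm:generic} is a consequence of (hence evidence for) the conjecture, and by OEIS data; the note added in revision records that the conjecture was proven only afterwards, by Bisain~\cite{ankit}. So there is no argument in the paper to compare yours against, and the only honest question is whether your proposal constitutes a proof on its own terms.

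It does not, as you yourself acknowledge, and the gap is exactly where you place it. What you do establish is a correct pair of reductions: the last $k+1$ columns of $D_P(\Lambda)$ are indeed $D_{P^{(n-k)}}(\Lambda)$, so the necessity direction reduces to your statement $(\star)$; and $(\star)$ is correctly equivalent to the assertion that every $(d+1)$-subset of the forms $L_{i,j}$ that is linearly dependent for generic $\Lambda$ has $v_0$ in its span (a dependent subset whose span avoids $v_0$ closes up to a flat, producing a degree-$d$ dope matrix with at least $d+1$ ones, and conversely). The sufficiency outline is also structurally sound, with two caveats: when $|E_D|<n$ you need the padding device from the proof of Theorem~\ref{thm:m=2} (prepending all-ones columns) before Condition T matches a square P\'olya system, and closure against an extra position $(i',j')$ for which $E_D\cup\{(i',j')\}$ \emph{violates} Condition T must be handled by the necessity direction rather than by a regularity statement. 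But the two facts you isolate at the end --- that no generically singular P\'olya configuration avoids $v_0$, and that the remaining Condition-T configurations are generically realized as flats avoiding $v_0$ --- are not proven anywhere in your proposal; they \emph{are} the conjecture, rewritten in the language of the forms $L_{i,j}$. For $m\ge 3$ the P\'olya condition famously does not imply regularity of a Birkhoff scheme, so neither Theorem~\ref{thm:polya} nor the Gessel--Viennot criterion underlying Theorem~\ref{thm:m=2} applies, and you supply no substitute for them. In short: a correct and clarifying reformulation, an accurately identified obstacle, but a genuine gap --- which is consistent with the fact that the authors left the statement open and that its eventual resolution required new ideas.
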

We remark that the $m=1$ version of this conjecture holds trivially and that the $m=2$ version is the content of Theorem~\ref{thm:m=2}.  Also, explicit computations of $\mathcal{D}_n^{\gen(m)}$ for $m=3$ and $n \leq 6$ are consistent with Conjecture~\ref{conj:generic}.  Finally, since Theorem~\ref{thm:generic} is a fairly quick consequence of Conjecture~\ref{conj:generic}, we may regard the former as evidence in support of the latter.  If Conjecture~\ref{conj:generic} is true, then it also gives a simple linear recurrence relation for $|\mathcal{D}_n^{\gen(m)}|$ for each fixed $m$ (by conditioning on the left-most column, as in the second half of Section~\ref{sec:two-rows}).

It could also be of interest to count or estimate $|\mathcal{D}_n(\Lambda)|$ for other particular choices of $\Lambda$, such as $\Lambda=(0,1, \ldots, m-1)$.  Furthermore, one could study the full set $\mathcal{D}_n^m$ of $m \times (n+1)$ dope matrices.  The most natural open problem about $\mathcal{D}_n^m$ is enumerative.

\begin{problem}\label{problem}
Estimate $|\mathcal{D}_n^m|$ for various values of the parameters $m$ and $n$.
\end{problem}

In a related direction, we can define an equivalence relation $\sim$ on $m$-tuples of distinct complex numbers by saying that $\Lambda \sim \Lambda'$ if $\mathcal{D}_n(\Lambda)=\mathcal{D}_n(\Lambda')$ for all natural numbers $n$.  We remarked in the introduction that $\Lambda\sim \Lambda'$ if $\Lambda'=a+b\varphi(\Lambda)$ for some $a \in \mathbb{C}$, $b \in \mathbb{C}^{\times}$, and $\varphi$ a $\mathbb{Q}$-automorphism of $\mathbb{C}$.  It would be interesting to characterize the equivalence classes of $\sim$ and, in particular, to determine whether or not all equivalent tuples are related as described in the previous sentence.

We highlight that this problem is nontrivial even for the case $m=3$.  Here, every triple is equivalent to $(0,1,\lambda)$ for some $\lambda$ (not $0$ or $1$).  It is clear from the preceding discussion that $(0,1,\lambda) \sim (0,1,\lambda')$ if either: $\lambda$ and $\lambda'$ are both transcendental; or $\lambda$ and $\lambda'$ are conjugate algebraic numbers.  We conjecture that all equivalences are of this form.

\begin{conjecture}\label{conj:equiv}
If $(0,1,\lambda) \sim (0,1,\lambda')$, then either: $\lambda$ and $\lambda'$ are both transcendental; or $\lambda$ and $\lambda'$ are conjugate algebraic numbers.
\end{conjecture}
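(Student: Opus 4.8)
The plan is to translate the relation $\sim$ into the linear-algebraic language of Section~\ref{sec:generic} and then to show that the family $\{\mathcal{D}_n(0,1,\lambda)\}_n$ remembers the minimal polynomial of $\lambda$. Writing $\Lambda=(0,1,t)$ with $t$ a formal parameter, recall that a $\{0,1\}$-pattern $E$ is realizable exactly when $V_E=\Span\{L_{i,j}:(i,j)\in E\}$ avoids both $v_0=(0,\dots,0,1)$ and every form $L_{i,j}$ with $(i,j)\notin E$. Each such incidence is governed by the vanishing of minors of the matrix whose rows are the forms involved, and -- since $t$ enters only through the third-row forms $L_{3,j}=\big(\binom{k}{j}t^{k-j}\big)_{0\le k\le n}$ -- every such minor is a polynomial in $t$ with integer coefficients. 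Thus $\mathcal{D}_n(0,1,\lambda)$ is determined by which of these \emph{poisedness determinants} vanish at $t=\lambda$; these are precisely the Birkhoff-interpolation determinants at the nodes $0,1,t$ whose two-node shadow underlies P\'olya's Theorem~\ref{thm:polya}. The easy direction of the conjecture is immediate: if $\lambda,\lambda'$ are conjugate algebraic (or both transcendental), then they are roots of exactly the same integer polynomials, so every minor vanishes at $\lambda$ iff at $\lambda'$, whence $\mathcal{D}_n(0,1,\lambda)=\mathcal{D}_n(0,1,\lambda')$ for all $n$.

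For the converse I would first record the (routine) fact that $\mathcal{D}_n(0,1,\lambda)$ conversely determines the matroid $M_n(\lambda)$ of linear dependencies among $\{L_{i,j}\}\cup\{v_0\}$ -- one recovers whether $v_0$ or a given $L_{i',j'}$ lies in a span by testing realizability of patterns built from $E$. Hence $(0,1,\lambda)\sim(0,1,\lambda')$ forces the \emph{same} sets of forms to be linearly dependent at $\lambda$ and at $\lambda'$, for every $n$. The conjecture then reduces to two separation statements. (L1): every algebraic $\lambda\notin\{0,1\}$ admits a set of forms that is independent for generic $t$ but dependent at $t=\lambda$. (L2): for algebraic $\lambda$, the set of $t$ at which every such $\lambda$-degenerate family remains dependent is exactly the Galois orbit of $\lambda$. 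Indeed, (L1) separates algebraic $\lambda$ (some generic family degenerates) from transcendental $\lambda'$ (no nonzero integer polynomial vanishes there), and (L2) separates non-conjugate algebraic numbers; in each case the distinguishing family yields a pattern realizable for one of $\lambda,\lambda'$ but not the other. I note that (L1), together with the equality analysis in Theorem~\ref{thm:generic}, already disposes of the transcendental-versus-algebraic case cleanly: a degeneration at $\lambda$ is an extra dependency absent generically, so the associated generic pattern is lost and $|\mathcal{D}_n(0,1,\lambda)|<|\mathcal{D}_n^{\gen(3)}|$ for that $n$.

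To attack (L1) and (L2) I would study these degeneracy loci directly: each is the common zero set of the maximal minors of a generically independent set of forms, a confluent-Vandermonde/Birkhoff determinant at the nodes $0,1,t$ (the extra row $v_0$ encoding the degree constraint). Small cases are encouraging: the $3\times 3$ minor on rows $L_{3,0},\,e_1,\,L_{2,0}$ at $n=2$ equals $1-t^2$, so such determinants do acquire genuine algebraic roots and, crucially, are divisible by the minimal polynomials of those roots. The goal is to prove that as $n$ and the node multiplicities grow these determinants realize \emph{every} monic irreducible $p\in\mathbb{Q}[t]$ as a factor (giving (L1)), and, sharpening this, that the joint zero locus of those degenerating at a fixed algebraic $\lambda$ collapses onto a single Galois orbit (giving (L2)).

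The hard part will be exactly (L2): controlling the factorizations of Birkhoff poisedness determinants at three nodes finely enough to guarantee that different minimal polynomials produce different degeneracy data. This is a rigidity question about which irreducibles over $\mathbb{Q}$ arise from, and are separated by, Birkhoff poisedness conditions, for which the existing P\'olya--Birkhoff machinery (Theorem~\ref{thm:polya} and its generalizations) supplies only the single-relation, two-row shadow. I expect that a quantitative form -- that for each monic irreducible $p$ one can build a node pattern whose poisedness determinant is $p$ times a product of powers of $t$ and $t-1$ -- would settle (L1) and (L2) simultaneously, and would likely also establish Conjecture~\ref{conj:generic}, since the vanishing of precisely these determinants is the mechanism by which a non-generic $\Lambda$ fails to realize a Condition~T matrix.
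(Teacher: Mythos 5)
This statement is Conjecture~\ref{conj:equiv}, which the paper does \emph{not} prove --- it is posed as an open problem, and the authors explicitly remark that even the ``degree-$2$'' case (e.g.\ $\lambda$ the square root of a positive rational) is unclear. Your proposal does not close this gap either: it is a research program, not a proof. The direction you do establish (conjugate algebraic numbers, or two transcendentals, yield the same sets $\mathcal{D}_n$) is the easy direction, which the paper already records in the sentence preceding the conjecture. The substantive direction is reduced by you to the two statements (L1) and (L2), and neither is proved. (L2) in particular --- that the common zero locus of the determinants degenerating at an algebraic $\lambda$ is exactly the Galois orbit of $\lambda$ --- is essentially a determinant-language restatement of the conjecture itself, so the reduction carries no real content; you acknowledge as much when you write that the ``hard part will be exactly (L2)'' and that you ``expect'' a quantitative factorization statement would settle it. Your one computed example, the minor with value $1-t^2$, only exhibits degeneration at the rational point $t=-1$; it gives no evidence for the crux, namely that every irreducible $p\in\mathbb{Q}[t]$ (say $t^2-2$) actually occurs as a factor of some poisedness determinant, which is precisely the difficulty the paper flags.

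Two secondary steps also need more care than your ``routine'' label suggests. First, recovering the full matroid $M_n(\Lambda)$ from $\mathcal{D}_n(\Lambda)$ is not automatic: the dope matrices correspond only to the flats avoiding the element $(1,n)$ (i.e.\ $v_0$), so for a set $E$ whose span contains $v_0$ you can detect \emph{that} fact (no realizable pattern contains $E$), but you cannot directly read off the internal dependencies of such a set; your argument needs to verify that the dependencies relevant to (L1)/(L2) are visible in flats avoiding $v_0$. Second, your claim that a single extra dependency at $\lambda$ forces $|\mathcal{D}_n(0,1,\lambda)|<|\mathcal{D}_n^{\gen(3)}|$ does not follow immediately from the equality analysis of Theorem~\ref{thm:generic}; that theorem says equality of cardinalities forces equality of the \emph{sets} $\mathcal{D}_n(\Lambda)=\mathcal{D}_n^{\gen(3)}$, so you must still argue that a generic-only independence translates into a pattern realizable generically but not at $\lambda$ (or vice versa), which again runs through the matroid-recovery issue above. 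In short: the architecture (translate $\sim$ into vanishing of integer-coefficient minors, then separate numbers by their degeneracy loci) is a sensible framing and consistent with Section~\ref{sec:generic} of the paper, but the conjecture remains open after your argument exactly where it was open before it.
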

One can easily construct examples showing that if $\lambda$ is a rational number other than $0$ or $1$, then $(0,1,\lambda)$ is not equivalent to any $(0,1,\lambda')$ with $\lambda \neq \lambda'$; this is the ``degree-$1$'' case of the above conjecture.  Even in the ``degree-$2$'' case, it is not clear how to prove the conjecture for the case where, for instance, $\lambda$ is the square root of a positive rational.

There are also dope-matrix-related objects that could be interesting to study.  First, we reiterate that signed dope matrices (see Definition~\ref{def:signed}) are a promising topic for future research.  Second, we remark that the discussion of Section~\ref{sec:generic} gives a natural way to associate a (representable) matroid $M_n(\Lambda)$ with each $m$-tuple $\Lambda$ and natural number $n$: The ground set is $[1,m] \times [0,n]$, and a subset of the ground set is independent if the corresponding collection of linear forms $L_{i,j}$ is linearly independent.  The $m \times (n+1)$ dope matrices arising from $\Lambda$ correspond to the flats of $M_n(\Lambda)$ that do not contain $L_{1,n}$ (equivalently, the proper flats of the contraction $M_n(\Lambda)/(1,n)$).  What can one say about the class of all matroids $M_n(\Lambda)$?

Finally, we provide the following data on $|\mathcal{D}_n(\Lambda)|$ for several triples $\Lambda$; we hope that patterns in this data will inspire future work.
    $$\begin{array}{c|ccccccc}
     & \multicolumn{7}{c}{n} \\
  \Lambda & 0 & 1 & 2 & 3 & 4 & 5 & 6 \\ [0.5ex]
\hline
 (0,1,\sqrt{2} ) & 1 & 4 & 19 & 98 & 525 & 2944 & 16870 \\
 (0,1,2) & 1 & 4 & 17 & 86 & 446 & 2503 & 14228 \\
 (0,1,3) & 1 & 4 & 19 & 92 & 497 & 2793 & 16022 \\
 (0,1,\pi) & 1 & 4 & 19 & 98 & 531 & 2974 & 17060 \\
 (0,1,4) & 1 & 4 & 19 & 98 & 519 & 2911 & 16712 \\
\end{array}$$
The OEIS~\cite{oeis} recognizes these terms of $|\mathcal{D}_n(0,1,\pi)|$ as coinciding with sequence A047099, which is the sequence satisfying the linear recurrence suggested by Conjecture~\ref{conj:generic}.

\emph{Note added in revision}: Since this paper appeared as a preprint, Bisain~\cite{ankit} has proven Conjecture~\ref{conj:generic}, provided fairly precise bounds for Problem~\ref{problem}, and derived several other interesting results on dope matrices.

\section*{Acknowledgements}
We thank Melvyn B. Nathanson for helpful conversations and for introducing us to this problem.  We are also grateful to Moshe Newman for suggesting the analogy with Birkhoff Interpolation, and we benefited from conversations with Sergei Konyagin.  The Online Encyclopedia of Integer Sequences~\cite{oeis} was helpful at several stages of this work.  Finally, we thank an anonymous referee for their careful reading of our paper.

The first author is supported in part by NSF grant DMS--1855464 and BSF grant 2018267. The second author is supported by an NSF Graduate Research Fellowship (grant DGE--2039656).  The third author is supported by a PSC--CUNY Award, jointly funded by The Professional Staff Congress and The City University of New York. 

\begin{bibdiv}
\begin{biblist}
 \bib{Al}{article}{
   author={Alon, N.},
   title={Tools from higher algebra},
   conference={
      title={Handbook of combinatorics, Vol. 1, 2},
   },
   book={
      publisher={Elsevier Sci. B. V., Amsterdam},
   },
   date={1995},
   pages={1749--1783},
   review={\MR{1373688}},
}

\bib{AlonSpencer}{book}{
   author={Alon, N.},
   author={Spencer, J. H.},
   title={The probabilistic method},
   series={Wiley Series in Discrete Mathematics and Optimization},
   edition={4},
   publisher={John Wiley \& Sons, Inc., Hoboken, NJ},
   date={2016},
   pages={xiv+375},
   isbn={978-1-119-06195-3},
   review={\MR{3524748}},
}

\bib{AT}{book}{
   author={Atiyah, M. F.},
   author={Macdonald, I. G.},
   title={Introduction to commutative algebra},
   publisher={Addison-Wesley Publishing Co., Reading, Mass.-London-Don
   Mills, Ont.},
   date={1969},
   pages={ix+128},
   review={\MR{0242802}},
}

\bib{ankit}{article}{
  title={Generic Classification and Asymptotic Enumeration of Dope Matrices},
  author={Bisain, A.},
  journal={arXiv preprint arXiv:2209.13811v1},
  year={2022}
}

\bib{BinomialDeterminants}{article}{
    author={Gessel, I.},
    author={Viennot, G.},
    title={Binomial determinants, paths, and hook length formulae},
    journal={Adv. in Math.},
    volume={58},
    date={1985},
    number={3},
    pages={300--321},
    issn={0001-8708},
    review={\MR{815360}},
    doi={10.1016/0001-8708(85)90121-5},
}
\bib{Mi}{article}{
   author={Milnor, J.},
   title={On the Betti numbers of real varieties},
   journal={Proc. Amer. Math. Soc.},
   volume={15},
   date={1964},
   pages={275--280},
   issn={0002-9939},
   review={\MR{161339}},
   doi={10.2307/2034050},
}
\bib{Nathanson}{article}{
    title={Interactions of zeros of polynomials and multiplicity matrices},
    author={Nathanson, M. B.},
    journal={arXiv:2203.02477},
    url={https://arxiv.org/abs/2203.02477},
    pages = {17 pages},
    note = {v4, submitted March 15, 2022},
    }
\bib{oeis}{webpage}{
    title={The On-Line Encyclopedia of Integer Sequences},
    year={2022},
    author={OEIS Foundation Inc.},
    url={https://oeis.org},
}
\bib{Polya}{article}{
    title={Bermerkung zur Interpolation und zur N\"{a}herungstheorie der Balkenbiegung},
    author={P\'{o}lya, G.},
    journal={Z. Angew. Math. Mach.},
    volume={11},
    date={1927},
    pages={445-449},
    }
\bib{Robbins}{article}{
   author={Robbins, H.},
   title={A remark on Stirling's formula},
   journal={Amer. Math. Monthly},
   volume={62},
   date={1955},
   pages={26--29},
   issn={0002-9890},
   review={\MR{69328}},
   doi={10.2307/2308012},
}
\bib{RBG}{article}{
   author={R\'{o}nyai, L.},
   author={Babai, L.},
   author={Ganapathy, M. K.},
   title={On the number of zero-patterns of a sequence of polynomials},
   journal={J. Amer. Math. Soc.},
   volume={14},
   date={2001},
   number={3},
   pages={717--735},
   issn={0894-0347},
   review={\MR{1824986}},
   doi={10.1090/S0894-0347-01-00367-8},
}

\bib{Birkhoff}{article}{
   author={Schoenberg, I. J.},
   title={On Hermite-Birkhoff interpolation},
   journal={J. Math. Anal. Appl.},
   volume={16},
   date={1966},
   pages={538--543},
   issn={0022-247X},
   review={\MR{203307}},
   doi={10.1016/0022-247X(66)90160-0},
}
\bib{Sharma}{article}{
   author={Sharma, A.},
   title={Some poised and nonpoised problems of interpolation},
   journal={SIAM Rev.},
   volume={14},
   date={1972},
   pages={129--151},
   issn={0036-1445},
   review={\MR{313676}},
   doi={10.1137/1014004},
}
\bib{Wa}{article}{
   author={Warren, H. E.},
   title={Lower bounds for approximation by nonlinear manifolds},
   journal={Trans. Amer. Math. Soc.},
   volume={133},
   date={1968},
   pages={167--178},
   issn={0002-9947},
   review={\MR{226281}},
   doi={10.2307/1994937},
}
\end{biblist}
\end{bibdiv}

\end{document}